\documentclass[11pt]{amsart}
\usepackage{amsthm}
\usepackage{mathrsfs}
\usepackage{amssymb}
\usepackage{amsmath}
\usepackage{hyperref}
\usepackage{xcolor}
\usepackage{amsthm}
\usepackage{dsfont}
\usepackage[matrix,arrow]{xy}

\usepackage{enumerate}

\usepackage[style=trad-abbrv,maxnames=99,maxalphanames=9, isbn=false, giveninits=true, doi=false, url=true]{biblatex}
\bibliography{Hermitianref.bib}
\renewbibmacro{in:}{}

\DeclareMathAlphabet\mathbfcal{OMS}{cmsy}{b}{n}

\setcounter{tocdepth}{1}

\numberwithin{equation}{section}

\addtolength{\oddsidemargin}{-.675in}
	\addtolength{\evensidemargin}{-.675in}
	\addtolength{\textwidth}{1.35in}

	\addtolength{\topmargin}{-.875in}
	\addtolength{\textheight}{1.35in}

\newtheorem{theorem}{Theorem}[section]

\newtheorem{lemma}[theorem]{Lemma}

\newtheorem{proposition}[theorem]{Proposition}
\newtheorem{corollary}[theorem]{Corollary}
\newtheorem{remark}[theorem]{Remark}

\newtheoremstyle{named}{}{}{\itshape}{}{\bfseries}{.}{.5em}{\thmnote{#3\ }#1}
\theoremstyle{named}

 \newcommand{\RR}{\mathbb{R}}
 \newcommand{\CC}{{\mathbb C}}

\newcommand{\ddc}{\sqrt{-1}\partial\bar\partial}

\newcommand{\exph}{\mathrm{exph}}
\newcommand{\dist}{\mathrm{dist}}
\def\Xint#1{\mathchoice
{\XXint\displaystyle\textstyle{#1}}%
{\XXint\textstyle\scriptstyle{#1}}%
{\XXint\scriptstyle\scriptscriptstyle{#1}}%
{\XXint\scriptscriptstyle\scriptscriptstyle{#1}}%
\!\int}
\def\XXint#1#2#3{{\setbox0=\hbox{$#1{#2#3}{\int}$ }
\vcenter{\hbox{$#2#3$ }}\kern-.6\wd0}}

\def\dashint{\Xint-}

\title{Modulus of continuity for solutions to complex Monge-Amp\`ere equations on Hermitian manifolds
}

\usepackage{hyperref}
\hypersetup{colorlinks,linkcolor={blue},citecolor={red}}

\begin{document}

\author{Junbang Liu}
\address{Department of Mathematics, Hong Kong University of Science and Technology, Clear Water Bay.}
\email{junbangliu@ust.hk}
\begin{abstract}
    In this note, we give a proof of the uniform log-continuity of the solution to complex Monge-Amp\`ere equations on compact Hermitian manifolds, which is a generalization of the result of \cite{GPTW} in the K\"ahler case.
\end{abstract}
\maketitle

\section{introduction}
Let $(M^n,\omega_M)$ be a $n$-dim compact Hermitian manifold with a fixed Hermitian metric $\omega_M$. Let $F\in C^\infty(M,\RR)$ be a smooth real function. Then it's proved by Tosatti and Weinkove in \cite{TW10} that the following volume conjecture holds: there exists an unique $\varphi\in C^\infty(M,\RR)$ and unique constant $c$ such that \begin{equation}\label{maequation}
    (\omega_M+\ddc u)^n=e^{F+c}\omega_M^n,\quad \omega_M+\ddc u>0,\quad \sup_M u=0.
\end{equation}
%The constant $c$ is characterized in a min-max way by Guo and Song in []. 

The theorem we proved in this note is \begin{theorem}\label{mainthm}
    Given $p>n$, for any $0<\alpha<\frac{p}{n}-1$, there exists a constant $C$ depending on $M,\omega_M,p,\alpha$, and upper bound of $\int_M(|F|^p+1)e^F\omega_M^n$, positive lower bound of $\int_Me^{\frac{1}{n}F}\omega_M^n$, such that solutions to \eqref{maequation} satisfies \[
    |u(x)-u(y)|\leq \frac{C}{|\log dist_{\omega_M}(x,y)|^{\alpha}}.
    \]
\end{theorem}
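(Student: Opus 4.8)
The plan is to derive the log-modulus of continuity from a \emph{quantitative comparison estimate} for the complex Monge--Amp\`ere operator, applied to a geometric sup-convolution of $u$; this is the Hermitian analogue of the scheme of \cite{GPTW}, combined with the sup-convolution machinery developed on compact Hermitian manifolds for the H\"older-continuity problem. Below, $C$ always denotes a constant depending only on the data listed in the statement.

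\textbf{Step 1: a priori control.} I would first observe that the hypotheses bound $c$ and $\|u\|_{L^\infty}$. Pointwise AM--GM for the eigenvalues of $\omega_M+\ddc u$ relative to $\omega_M$ gives $\bigl(\tfrac{(\omega_M+\ddc u)^n}{\omega_M^n}\bigr)^{1/n}\le\tfrac{(\omega_M+\ddc u)\wedge\omega_M^{n-1}}{\omega_M^n}$, and integrating, together with the Hermitian Stokes identity $\int_M\ddc u\wedge\omega_M^{n-1}=\int_M u\,\ddc(\omega_M^{n-1})$, yields $e^{c/n}\int_M e^{F/n}\omega_M^n\le\vol_{\omega_M}(M)+C\|u\|_{L^\infty}$; using the lower bound on $\int_M e^{F/n}\omega_M^n$ and the uniform bound $\|u\|_{L^\infty}\le C_0$ (Tosatti--Weinkove \cite{TW10}; see also the Hermitian $L^\infty$ estimates of Kolodziej--Nguyen and of Guo--Phong--Tong), this bounds $c$ from above, and a lower bound for $c$ follows similarly. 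Consequently the density $f:=e^{F+c}$ obeys the entropy-type bound $\int_M f\,(\log f)_+^p\,\omega_M^n\le C$, and one has the mass bound $\int_M\ddc u\wedge\omega_M^{n-1}\le C$ and, more generally, Chern--Levine--Nirenberg bounds $\int_M(\omega_M+\ddc u)^k\wedge\omega_M^{n-k}\le C$ for $0\le k\le n$.

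\textbf{Step 2: the quantitative comparison estimate.} The heart of the argument is to prove: for every $\psi\in\PSH(M,\omega_M)$ with $\sup_M\psi=0$,
\[
\sup_M(\psi-u)\ \le\ \frac{C}{\bigl(-\log\|\psi-u\|_{L^1(\omega_M^n)}\bigr)^{\alpha}}\qquad\text{for every }0<\alpha<\tfrac pn-1 .
\]
I would follow the auxiliary Monge--Amp\`ere equation method of Guo--Phong--Tong. Writing $\rho=\sup_M(\psi-u)$ and $\Omega_s=\{\psi-u>s\}$ for $0<s<\rho$, one solves a global complex Monge--Amp\`ere equation $(\omega_M+\ddc v_s)^n=\tau_s e^{n\Lambda_s}f\,\omega_M^n$ (solvable by \cite{TW10}), in which $\tau_s$ and $\Lambda_s$ are calibrated from the entropy bound of Step 1 through a Young/Moser--Trudinger inequality; the comparison principle on $\Omega_s$ then yields, up to Hermitian torsion corrections, an inequality of the shape $(\rho-s)^{n}\le C\,\tau_s\,\vol_{\omega_M}(\Omega_s)$, and a De Giorgi--type iteration over a dyadic sequence of levels $s$, in which the entropy exponent $p$ controls the geometric decay, produces the displayed estimate together with the sharp range $\alpha<\tfrac pn-1$. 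This is where the genuinely Hermitian difficulty lies, and I expect it to be the main obstacle: since $\ddc$ is not closed, $\int_M\ddc(\cdot)\wedge\omega_M^{n-1}\ne0$, and there is no global $\partial\bar\partial$-lemma, the comparison principle and every integration by parts in this scheme pick up torsion corrections of the form $d\omega_M\wedge(\cdots)$ and $\ddc\omega_M\wedge(\cdots)$; they are of lower order and absorbable by means of Step 1, but tracking them uniformly throughout the iteration is the principal technical task.

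\textbf{Step 3: regularization and conclusion.} For small $\delta>0$ set $u_\delta(x):=\sup\{u(y):\dist_{\omega_M}(x,y)\le\delta\}$, so that $u\le u_\delta\le0$ and $\sup_M u_\delta=0$. The standard Hermitian sup-convolution lemma (as in the H\"older-continuity literature, e.g.\ Kolodziej--Nguyen) gives $(1+C_1\delta)\omega_M+\ddc u_\delta\ge0$, hence $\psi_\delta:=(1+C_1\delta)^{-1}u_\delta\in\PSH(M,\omega_M)$ with $\sup_M\psi_\delta=0$ and $0\le u_\delta-\psi_\delta\le C\delta$, the Hermitian torsion entering only the constant $C_1$. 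A mollification estimate, using the Chern--Levine--Nirenberg bounds of Step 1, gives $\|u_\delta-u\|_{L^1(\omega_M^n)}\le C\delta^{\gamma}$ for some $\gamma>0$, whence $\|\psi_\delta-u\|_{L^1(\omega_M^n)}\le C\delta^{\gamma}$. Feeding this into Step 2,
\[
\sup_M(u_\delta-u)\ \le\ \sup_M(u_\delta-\psi_\delta)+\sup_M(\psi_\delta-u)\ \le\ C\delta+\frac{C}{\bigl(\gamma\log(1/\delta)\bigr)^{\alpha}}\ \le\ \frac{C'}{\bigl(\log(1/\delta)\bigr)^{\alpha}} .
\]
Finally, for $x,y\in M$ with $t:=\dist_{\omega_M}(x,y)$ small one has $u(x)\le u_t(y)\le u(y)+C'\bigl(\log(1/t)\bigr)^{-\alpha}$, and by symmetry $|u(x)-u(y)|\le C'/|\log\dist_{\omega_M}(x,y)|^{\alpha}$; for $t$ bounded below the bound is trivial after enlarging $C'$. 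Note that the sharp exponent is forced entirely by Step 2, while Step 3 contributes only the harmless $O(\delta)$ error.
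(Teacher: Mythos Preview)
Your overall architecture matches the paper's: a stability estimate $\sup_M(\psi-u)\le C/|\log\|(\psi-u)_+\|_{L^1}|^{\alpha}$ (your Step~2, the paper's Proposition~\ref{stabilityestimate}) followed by a regularization step (your Step~3, the paper's Section~3). But Step~3 as written contains a genuine gap, and Step~2 glosses over the place where the Hermitian difficulty is actually resolved.

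\textbf{Step 3.} Your claim that the geometric sup-convolution $u_\delta(x)=\sup_{\dist_{\omega_M}(x,y)\le\delta}u(y)$ satisfies $(1+C_1\delta)\omega_M+\ddc u_\delta\ge0$ is not a ``standard lemma''; no such statement is available on a curved manifold. The flat-space proof uses that each translate $x\mapsto u(x+\zeta)$ is psh, so the supremum is psh; on $M$ the family $x\mapsto u(\exp_x(\zeta))$ is \emph{not} a family of $\omega_M$-psh functions (the geodesic balls are not translates of one another), and there is no reason for the defect of positivity of the sup to be $O(\delta)$ rather than $O(1)$. This is precisely why the paper, and every paper in the H\"older-continuity literature you invoke (Kolodziej--Nguyen included), does \emph{not} feed $u_\delta$ into the stability estimate. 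Instead one uses Demailly's mollification $\rho_\delta u$ over holomorphic-exponential balls together with the Kiselman infimum
\[
U_{c_1,\delta}(x)=\inf_{t\in(0,\delta]}\Bigl(\rho_t u(x)+Kt-c_1\log\tfrac{t}{\delta}-K\delta\Bigr),
\]
for which Berman--Demailly prove $\omega_M+\ddc U_{c_1,\delta}\ge -(Ac_1+K\delta)\omega_M$. Applying the stability estimate to a rescaling of $U_{c_1,\delta}$ with $c_1=|\log\delta|^{-\alpha}$ yields $\rho_\delta u-u\le C|\log\delta|^{-\alpha}$; this controls only the \emph{averaged} oscillation, and a further sub-mean-value argument (the paper's final lemma) is needed to upgrade it to the pointwise modulus of continuity. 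Neither the Kiselman construction nor this last step appears in your outline.

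\textbf{Step 2.} You describe the iteration as a comparison principle on $\Omega_s$ with ``torsion corrections absorbable by Step~1''. That is not how the paper proceeds, and it is not clear such an approach works. The paper's mechanism is to first solve an auxiliary global equation $(\omega_M+\ddc\psi_1)^n=e^{c_1}\Phi(F)e^F\omega_M^n$ with $\Phi(t)=t^{n\alpha}$, and then (Lemma~\ref{comparisonlemma}) to build from $\psi_1$ a barrier $\psi_2$ and a parameter $a\in(0,1)$ with
\[
\omega_u^n\ \le\ a^n\omega_{\psi_2}^n+f\,\omega_M^n,\qquad f=e^{-\frac{\alpha_M}{q}\psi_1+\Lambda_1+c}\in L^q\ \text{for all }q.
\]
This reduces the entropy-type hypothesis on $e^F$ to an $L^q$ density $f$, after which the Guo--Phong--Tong auxiliary-equation argument runs \emph{pointwise via the maximum principle}, with no integration by parts and hence no torsion terms to track. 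The restriction $\alpha<\tfrac{p}{n}-1$ enters exactly here: it is what guarantees uniform control of $c_1$ and $\|\psi_1\|_{L^\infty}$ via Lemma~\ref{estimatesupslope}. Your sketch does not mention this reduction, and without it the entropy exponent $p$ never enters the iteration in a usable way.
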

It's a generalization of the theorem in \cite{GPTW,GG23} to the Hermitian manifolds. 
We remark that in the K\"ahler case, the exponent $\alpha$ in the $\log$-continuity can be $\frac{p}{n}-1$. The H\"older regularity of the solutions to complex Monge-Amp\`ere equation on compact Hermitian manifolds have been studied by many authors, see \cite{MR4398254,KN19} and the reference therein, after the internsively developed pluripotential theorey on Hermitian manifolds(see \cite{Di16,Ng16,KN15}). Here we use a purely PDE method developed by Guo-Phong-Tong in \cite{GP23}.  %Unlike the compact K\"ahler case, the modulus of continuity proved in this note can not be passed directly to singular bounded solutions by a stability estimate since the uniqueness of the continuous solution on compact Hermitian manifolds is still unknown so far. It's remarkable that for the case of $e^F\in L^p$ for some $p>1$, the H\"older continuity established in \cite{MR4398254} by pluripotential theory applies to bounded solution.

In the following, we say a constant is uniform if it depends on the parameters stated in the above theorem. 

\section{stability estimate}
The following stability estimate is crucial to our proof. 
\begin{proposition}\label{stabilityestimate}
    For any $v\in Psh(\omega_M)\cap C^\infty(M,\RR^+)$, there exists a uniform constant $C$ additionally depending on $||v||_{L^\infty}$ such that \[
    \sup(v-u)\leq \frac{C}{ |\log(||(v-u)_+||_{L^1})|^{\alpha}}.
    \]

\end{proposition}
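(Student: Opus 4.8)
The plan is to adapt the Guo–Phong–Tong auxiliary-equation method (as in \cite{GP23,GPTW}) to the Hermitian setting. The core idea is to compare $u$ against the competitor $v$ by constructing an auxiliary Monge–Ampère solution whose right-hand side is a suitable multiple of $(v-u)_+$, and then extract an $L^\infty$ bound from an Alexandrov–Bakelman–Pucci / Guo–Phong–Tong quantitative comparison. Concretely, set $\psi_s = s - \epsilon \log(\text{something}) $ — more precisely, fix a small parameter $s>0$, let $U = \{v-u > s\}$, and note that controlling $\sup(v-u)$ amounts to showing $U = \emptyset$ once $s$ exceeds the claimed bound. On $U$ we have $\omega_M + \ddc v > 0$ and $\omega_M + \ddc u > 0$, so for $t \in (0,1)$ the convex combination $u_t = (1-t)u + tv$ satisfies $\omega_M + \ddc u_t > 0$ as well, and by a standard mixed-Monge–Ampère (Gårding-type) inequality $(\omega_M + \ddc u_t)^n \geq (1-t)^n (\omega_M+\ddc u)^n = (1-t)^n e^{F+c}\omega_M^n$.

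**Next I would** introduce the auxiliary equation: solve
\begin{equation*}
(\omega_M + \ddc \phi)^n = \frac{\tau_s}{A_s}\, \mathbf{1}_U \cdot (v-u-s)_+ \, e^{F}\omega_M^n + (\text{normalizing term})\,\omega_M^n, \qquad \sup_M \phi = 0,
\end{equation*}
where $A_s = \int_U (v-u-s)_+ e^F \omega_M^n$ and $\tau_s$ is chosen so the total masses match (this uses solvability of the Hermitian Monge–Ampère equation with $L^p$ right-hand side, which in turn rests on the a priori $L^\infty$ estimate of Kołodziej–Nguyen / Guo–Phong–Tong type valid on Hermitian manifolds — here is where $p>n$ and the integrability hypotheses on $F$ enter). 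One then has a uniform bound $\|\phi\|_{L^\infty} \leq C$ by the Hermitian $L^\infty$ estimate, with $C$ depending only on the stated quantities. The key pointwise comparison is carried out on the contact set where $v - u - s - \delta\phi$ (for a small $\delta$) attains its maximum: at such a point the complex Hessian comparison forces $(\omega_M + \ddc u_t)^n$ to be controlled by a multiple of $(\omega_M + \ddc \phi)^n$, and combining with the Gårding lower bound on $(\omega_M+\ddc u_t)^n$ and integrating gives an inequality of the shape $s^n \lesssim A_s^{?} \cdot |\log A_s|^{?}$, or after optimizing in $t$ and the De Giorgi iteration variable, $\sup(v-u) \lesssim |\log \|(v-u)_+\|_{L^1}|^{-\alpha}$.

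**The main obstacle** I anticipate is twofold. First, the Hermitian correction terms: unlike the Kähler case, $\ddc$ of the metric is nonzero, so $\omega_M^n$ is not closed and integration by parts produces torsion terms; moreover the mixed Monge–Ampère / Gårding inequality $(\omega_M+\ddc u_t)^n \geq (1-t)^n(\omega_M+\ddc u)^n$ is a purely pointwise linear-algebra statement and survives, but any argument that wants to integrate $\ddc(v-u)$ against test functions must absorb $\|d\omega_M\|$ and $\|dd^c\omega_M\|$ into the uniform constant — this is exactly why the exponent degrades from $\tfrac{p}{n}-1$ to any $\alpha < \tfrac{p}{n}-1$. Second, one must run a De Giorgi–type iteration or a telescoping argument over a sequence of thresholds $s_k$ to upgrade the single-threshold estimate into the stated logarithmic modulus; getting the logarithmic (rather than power-law) decay requires the iteration gain to be only borderline, which is delicate and is where the precise form of the $L^\infty$ estimate's dependence on $\|e^F\|$ (through $\|F\|_{L^p(e^F\omega_M^n)}$ and $\int e^{F/n}\omega_M^n$) must be tracked carefully.

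**A cleaner route**, and the one I would actually follow, is to invoke the Guo–Phong–Tong $L^\infty$ comparison estimate directly: given that $v \in \mathcal{PSH}(\omega_M)\cap C^\infty$ and $u$ solves \eqref{maequation}, apply their theorem with the measure $\mu = e^{F+c}\omega_M^n$ to the pair $(u,v)$, which yields $\sup(v-u) \leq C\, N\big(\|(v-u)_+\|_{L^1(\mu)}\big)$ for an explicit concave modulus $N$ built from the Orlicz-norm entropy of $\mu$; under $\int (|F|^p+1)e^F\omega_M^n < \infty$ one checks $N(t) \lesssim |\log t|^{-\alpha}$ for every $\alpha < \tfrac{p}{n}-1$, and since $\mu$ and $\omega_M^n$ are comparable in total mass with uniform constants, $\|(v-u)_+\|_{L^1(\mu)}$ is comparable to $\|(v-u)_+\|_{L^1}$. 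The only genuinely new input over the Kähler case is verifying that the GPT auxiliary-equation construction goes through on a Hermitian manifold — i.e. that the needed existence and a priori $L^\infty$ estimates for the Hermitian complex Monge–Ampère equation with $L^p$ data hold with constants of the stated dependence — which I would quote from \cite{KN15,GP23} and the references in the introduction.
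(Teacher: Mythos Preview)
Your proposal captures the high-level Guo--Phong--Tong strategy, but it misses the specific device that makes the argument run under the stated hypotheses, and it misdiagnoses the source of the exponent loss.

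\textbf{The gap.} Your auxiliary equation has right-hand side proportional to $(v-u-s)_+\, e^{F}\omega_M^n$, and you claim the resulting $\phi$ obeys a uniform $L^\infty$ bound ``by the Hermitian $L^\infty$ estimate'' for $L^p$ data. But the hypothesis on $F$ is $\int_M(|F|^p+1)e^F\omega_M^n<\infty$, an entropy/Orlicz bound of type $e^F\in L(\log L)^p$; it does \emph{not} place $e^F$ in any $L^q$ with $q>1$. So the right-hand side of your auxiliary equation is not in $L^q$, the quoted $L^\infty$ estimate does not apply with uniform constants, and the De Giorgi step cannot close to give a logarithmic modulus. The ``cleaner route'' of quoting the GPT modulus $N$ as a black box fails for the same reason: that modulus is established in \cite{GPTW} for K\"ahler manifolds, and the content of the proposition is precisely its Hermitian extension.

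\textbf{What the paper actually does} is a two-layer comparison that \emph{converts} the entropy bound into an honest $L^q$ bound on an auxiliary density before running the GPT machine. First solve $(\omega_M+\ddc\psi_1)^n=e^{c_1}\Phi(F)e^F\omega_M^n$ with $\Phi(x)=x^{n\alpha}$; Lemma~\ref{estimatesupslope} bounds $e^{-c_1}$ and $\|\psi_1\|_{L^\infty}$ \emph{provided} a weight $\tilde\Phi$ with $\int_1^\infty\tilde\Phi^{-1/n}<\infty$ is available, and checking this is exactly where $\alpha<\tfrac{p}{n}-1$ is used. Then compose with an increasing concave $h$ to build $\psi_2=-h(-\tfrac{\alpha_M}{q}\psi_1+\Lambda_1)$ and obtain the pointwise splitting (Lemma~\ref{comparisonlemma})
\[
\omega_u^n\le a^n\omega_{\psi_2}^n+f\,\omega_M^n,\qquad f=\exp\bigl(-\tfrac{\alpha_M}{q}\psi_1+\Lambda_1+c\bigr).
\]
The point is that now $f\in L^q$ by Tian's $\alpha$-invariant, with $\|f\|_{L^q}\le Ce^{\Lambda_1}$. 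Only then is the auxiliary equation solved (for $\psi_3$) with right-hand side $\sim\eta_j(\tilde u-s)\,f$, where $\tilde u=-u+(1-2a)v+a\psi_2$ also carries the correction $\psi_2$; the maximum-principle comparison and De Giorgi iteration are run against $\phi(s)=\int_{\Omega_s}f\,\omega_M^n$. The free parameter $\Lambda_1$ (equivalently $a$) is chosen at the very end to be $\sim\tfrac{1}{B}|\log\|(v-u)_+\|_{L^1}|$, balancing the error $a\sim\Lambda_1^{-\alpha}$ against the growth $e^{C\Lambda_1}$ of $\|f\|_{L^q}$.

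\textbf{On the exponent loss.} Your attribution of the restriction $\alpha<\tfrac{p}{n}-1$ to Hermitian torsion terms arising from integration by parts is not what happens: the proof is pure maximum principle, with no integration by parts. The loss comes from the need, in the Hermitian case, to control the sup-slope $e^{-c_1}$ and $\|\psi_1\|_{L^\infty}$ via the ABP-type Lemma~\ref{estimatesupslope}, which requires the weight condition $\int_1^\infty\tilde\Phi^{-1/n}<\infty$; see the remark at the end of Section~2.
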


To begin with, we need a lemma to estimate the constant $c$ under the assumption of a weaker bound on the volume density. 
\begin{lemma}\label{estimatesupslope}
    Let $(u,c)$ be a smooth solution to equation \eqref{maequation} and $\Phi:\RR\rightarrow \RR^+$ be a positive increasing function with $\int_1^\infty\Phi^{-\frac{1}{n}}(t)dt<\infty$.  Then there exists a constant $C_0$ depending on $M,\omega_M, \Phi$ and upper bounds for $\int_M\Phi(F)e^F\omega_M^n,$ and positive lower bound of $\int_Me^{\frac{F}{n}}\omega_M^n$, such that \[e^c+e^{-c}+||u||_{L^\infty}\leq C_0.\] 
\end{lemma}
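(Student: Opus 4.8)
The plan is to bound $e^{c}$, $e^{-c}$ and $\|u\|_{L^\infty}$ in that order, each step fed by the previous ones together with the hypotheses on $F$. Two geometric facts, standard on any compact Hermitian manifold (see the pluripotential references in the introduction), will be used throughout: for every $v\in Psh(\omega_M)$ with $\sup_M v=0$ there is a uniform $L^1$ bound $\|v\|_{L^1(\omega_M^n)}\le C$, and a uniform two-sided total-mass bound $c_1\le\int_M(\omega_M+\ddc v)^n\le C_1$. The latter is what replaces the K\"ahler identity $\int_M(\omega_M+\ddc v)^n=\int_M\omega_M^n$, and is the reason the Hermitian case is not a verbatim copy of \cite{GPTW}; its upper half follows by expanding the $n$-th power and integrating the mixed terms $\int_M(\ddc v)^k\wedge\omega_M^{n-k}$ by parts against the $L^1$ bound. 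Writing $\omega_u:=\omega_M+\ddc u$, this gives $c_1\le\int_M\omega_u^n\le C_1$.

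Integrating \eqref{maequation} gives $\int_M\omega_u^n=e^{c}\int_M e^{F}\omega_M^n$, so it suffices to bound $\int_M e^{F}\omega_M^n$ from above and from below. From above: picking $T$ with $\Phi(T)>0$ and using, on $\{F>T\}$, that $e^{F}\le\Phi(F)e^{F}/\Phi(T)$ (as $\Phi$ is increasing), one gets $\int_M e^{F}\omega_M^n\le e^{T}\int_M\omega_M^n+\Phi(T)^{-1}\int_M\Phi(F)e^{F}\omega_M^n$. From below: H\"older's inequality gives $\int_M e^{F/n}\omega_M^n\le\big(\int_M e^{F}\omega_M^n\big)^{1/n}\big(\int_M\omega_M^n\big)^{1-1/n}$, hence $\int_M e^{F}\omega_M^n\ge a^{n}\big(\int_M\omega_M^n\big)^{1-n}$, where $a$ is the given positive lower bound for $\int_M e^{F/n}\omega_M^n$. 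Combined with $c_1\le\int_M\omega_u^n\le C_1$, these bound $e^{c}$ both above and below, hence also $e^{-c}$. (Up to this point only the finiteness of $\int_M\Phi(F)e^{F}\omega_M^n$ was used, not the integrability of $\Phi^{-1/n}$.)

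The main point is the bound on $\|u\|_{L^\infty}=-\inf_M u$, for which I would follow the auxiliary-equation method of Guo-Phong-Tong. Since $e^{\pm c}$ are now controlled, the density $G:=\omega_u^n/\omega_M^n=e^{F+c}$ satisfies $\int_M G\,\omega_M^n\le C_1$ and, after a harmless shift of the argument of $\Phi$ absorbing the bounded constant $c$, lies in an Orlicz class $\int_M\widetilde\Phi(\log G)\,G\,\omega_M^n\le C$ with $\widetilde\Phi$ positive, increasing, and still satisfying $\int_1^\infty\widetilde\Phi^{-1/n}<\infty$. For $s\ge 0$ put $\Omega_s:=\{u<-s\}$ and let $(\rho_s,\tau_s)$ solve
\[
(\omega_M+\ddc\rho_s)^n=\tau_s\,\mathbf 1_{\Omega_s}\,\omega_u^n,\qquad \sup_M\rho_s=0,
\]
the existence of a bounded solution being the Hermitian analogue of the Calabi-Yau theorem for measures whose density lies in this Orlicz class (Tosatti-Weinkove, Kolodziej-Nguyen); integrating shows $\tau_s\int_{\Omega_s}\omega_u^n=\int_M\omega_{\rho_s}^n\in[c_1,C_1]$. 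A De Giorgi iteration on $\rho_s$, carried out as in the K\"ahler case (the finiteness of $\int_1^\infty\widetilde\Phi^{-1/n}$ being exactly what makes it converge), yields $\|\rho_s\|_{L^\infty}\le\eta\big(\int_{\Omega_s}\omega_M^n\big)$ for some modulus $\eta$ with $\eta(0^{+})=0$. Comparing $u$ on $\Omega_s$ with $-s+\tfrac12\rho_s$ through the comparison principle for the complex Monge-Amp\`ere operator on Hermitian manifolds (applicable once $\int_{\Omega_s}\omega_M^n$ is small, so that $\tau_s$ is large) gives $-\inf_M u\le s+\|\rho_s\|_{L^\infty}\le s+\eta\big(\int_{\Omega_s}\omega_M^n\big)$; since $\int_{\Omega_s}\omega_M^n\le\|u\|_{L^1(\omega_M^n)}/s\le C/s$ by Chebyshev's inequality, choosing $s$ to be a fixed large multiple of $C$ yields $\|u\|_{L^\infty}\le C_0$.

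The step I expect to be the main obstacle is making this last argument run in the Hermitian setting. Because the total mass $\int_M(\omega_M+\ddc\varphi)^n$ is not a fixed invariant, the auxiliary equation must carry its own normalizing constant $\tau_s$, which has to be controlled uniformly; this is precisely what forces one to establish the a priori two-sided mass bound beforehand and to invoke Hermitian versions of the solvability theory for non-smooth right-hand sides and of the comparison principle. The other quantitative point that needs care beyond \cite{GPTW} is running the De Giorgi iteration with the borderline Orlicz weight $\widetilde\Phi$, where the hypothesis $\int_1^\infty\widetilde\Phi^{-1/n}<\infty$ is used in a sharp way.
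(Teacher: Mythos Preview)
The central gap is your reliance on a uniform two--sided mass bound $c_1\le\int_M(\omega_M+\ddc v)^n\le C_1$ for $\omega_M$--psh functions normalized by $\sup v=0$. On a general compact Hermitian manifold this is \emph{not} a standard fact. Your sketch for the upper half (``integrating the mixed terms by parts against the $L^1$ bound'') only handles the $k=1$ term; for $k\ge 2$ one integration by parts leaves $(\ddc v)^{k-1}$ in the integrand, which is not controlled by $\|v\|_{L^1}$. More seriously, the lower half $v_-(\omega_M)>0$ is known to fail in general---the paper's own Remark after the stability proposition (item (3)) discusses exactly this, noting that a positive lower bound for $v_-(\omega_M)$ is only available ``in some cases''. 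Since your bound on $e^{-c}$ hinges on $\int_M\omega_u^n\ge c_1>0$, that step does not go through. The same issue reappears in your $L^\infty$ argument: controlling the normalizing constant $\tau_s$ in the auxiliary equation again requires the two--sided mass bound you have not established.

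The paper sidesteps the mass problem altogether. For $c$ it simply cites: the lower bound of $c$ comes from a complex ABP estimate \cite{liu2024complexalexandrovbakelmanpucciestimateapplications}, and the upper bound from \cite{KN15}, Lemma~5.9 (this is where the lower bound on $\int_M e^{F/n}\omega_M^n$ enters). For $\|u\|_{L^\infty}$, rather than solving a global auxiliary equation (which would reintroduce an uncontrolled constant), the paper solves \emph{local} Dirichlet problems $(\ddc\psi_i)^n=\Phi(F)e^F\omega_M^n$ on coordinate balls $2B_i$---where the complex ABP estimate bounds $\|\psi_i\|_{L^\infty}$ with no normalizing constant at all---and then glues the $\psi_i$ to a global $\psi$ via a regularized maximum against $C_2\rho_i$. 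This produces a comparison metric with $(\omega_M+\ddc\psi)^n\ge\epsilon_0\,\Phi(F)e^F\omega_M^n$ and uniformly bounded potential, after which the $L^\infty$ estimate follows from \cite{liu24c}. This local--to--global construction of the comparison metric is the key device you are missing.
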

\begin{proof}
The lower bound $c$ is proved in \cite{liu2024complexalexandrovbakelmanpucciestimateapplications} by the complex ABP estimate. The upper bound of $c$ is proved in \cite{KN15}  lemma 5.9, and it depends on the positive lower bound of $\int_Me^{\frac{F}{n}}\omega_M^n, M,\omega_M$. We only need to prove the $L^\infty$-estimate. Here, we introduce another method to construct a good comparison metric.  We cover the manifold $(M,\omega_M)$ by coordinates balls $\{B(x_i,2r_i)|i=1,\cdots,n\}$ such that
$B_i:=B(x_i,r_i)$ still covers $M$. Let $\rho_i$ be smooth functions on $M$, such that $2B_i=\{\rho_i\leq 0\}$ and $ \rho_i=|z|^2-4r_i^2$ on $2B_i$. On each $2B_i$, by the  work of Caffarelli-Kohn-Nirenberg-Spruck \cite{CNS2}, one can solve the following equation \begin{equation}
    (\ddc\psi_i)^n=\Phi(F)e^{F}\omega_M^n, \quad \ddc\psi_i>0, \quad \psi_i|_{\partial 2B_i}=-1.
\end{equation}
By the complex ABP estimate in \cite{liu2024complexalexandrovbakelmanpucciestimateapplications}, $||\psi_i||_{L^\infty}$ is uniformly bounded by a constant $C_1$. Then by choosing $C_2$ large enough with $3C_2r_i^2>C_1+1$ and $\epsilon$ small, let $\tilde{\psi}_i:=\max_{\epsilon}\{C_2\rho_i, \psi_i\}$, where $\max_\epsilon$ is the regularized max function, c.f. lemma 5.18 in Demailly's book \cite{DemBook}. Then the function $\tilde{\psi}_i$ satisfies: \begin{itemize}
    \item $\tilde{\psi}_i=\psi_i$ on $B_i$(since $C_2\rho_i<-C_1-1<\psi_i$ on $B_i$ with our choice of $C_2$);
    \item $\ddc\tilde{\psi}_i\geq -C_3\omega_M$ on $M$, for some uniform constant $C_3$ depending on $M,\omega_M$(since $\rho_i$ is smooth on $M$, and $\tilde{\psi}_i=\rho_i$ in a neighborhood of $M\setminus 2B_i$). 
\end{itemize} 
Let $\psi:=\frac{1}{C_3N}\sum_{i=1}^{N}\tilde{\psi}_i$.
Therefore, one has on each $B_i$, \[(\omega_M+\ddc\psi)^n\geq \frac{1}{(NC_3)^n}(\ddc\psi_i)^n\geq \epsilon_0\Phi(F)e^F\omega_M^n \] for some uniform constant $\epsilon_0$. Once one has such a comparison metric, the proof of $L^\infty$-estimate follows from \cite{liu24c}.
\begin{remark}It's interesting to know whether the constant c can be estimated from below when we remove the condition $\int_1^\infty\Phi^{-\frac{1}{n}}<\infty$? If so, it can be applied to get Trudinger type estimates for solutions to Monge-Amp\`ere equations on Hermitian manifolds like in the K\"ahler case.
\end{remark}
\end{proof}
With this lemma, we can construct appropriate comparison Hermitian metrics to reduce the proof to $L^p$-case. 
Let $\Phi(x)=x^{n\alpha}$ for $x>1$, and extend it smoothly to be a positive increasing function on $\RR$. Then \[
    \int_{M}(|\log\Phi(F)+F|^{p-n\alpha}+1)\Phi(F)e^F\omega_M^n\leq C\int_M(|F|+1)^pe^{F}\omega_M^n.
\]  
Let $\psi_1$ be the solution to equation \begin{equation}
    (\omega_M+\ddc\psi_1)^n=e^{c_1}\Phi(F)e^F\omega_M^n,\quad \omega_M+\ddc\psi_1>0, \quad \sup \psi_1=0.
\end{equation}
Then by lemma \ref{estimatesupslope}, the constant $e^{-c_1}$ and $||\psi_1||_{L^\infty}$ is uniformly bounded from above by a uniform constant $C_0$. Let $\alpha_M$ be Tian's $\alpha$-invariant of $(M,\omega_M)$.
For any $q,\Lambda_1>0, a\in (0,1)$, set  $h(s)=-\frac{q}{a\alpha_M}e^{\frac{c-c_1}{n}}\int_s^{C_0+\Lambda_1}\Phi^{-\frac{1}{n}}(t)dt$, and $\psi_2=-h(-\frac{\alpha_M}{q}\psi_1+\Lambda_1)$. Without loss of generality, we can assume that $\sup(-\frac{\alpha_M}{q}\psi)\leq C_0$, so in this case, we get that $\psi_2$ is a nonnegative function.  We have \begin{lemma}\label{comparisonlemma}
    The inequality holds\begin{equation}
        \omega_u^n\leq a^n\omega_{\psi_2}^n+f\omega_M^n,
    \end{equation}with $f=\exp{(-\frac{\alpha}{q}\psi_1+\Lambda_1+c)}$ and $\Lambda_1$ is chosen so that $\Phi(\Lambda_1)^{\frac{1}{n}}=\frac{1}{a}e^{\frac{c-c_1}{n}}.$
\end{lemma}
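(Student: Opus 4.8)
The plan is to argue pointwise on $M$ — all the functions involved are smooth, $\psi_1$ being the solution of a complex Monge--Amp\`ere equation — and to exploit that the profile $h$ is increasing and concave on the range that matters. First I would record the elementary properties of $h$: differentiating under the integral,
\[
h'(s)=\frac{q}{a\alpha_M}e^{\frac{c-c_1}{n}}\Phi^{-\frac1n}(s)>0,\qquad h''(s)=-\frac{q}{na\alpha_M}e^{\frac{c-c_1}{n}}\Phi^{-\frac1n-1}(s)\,\Phi'(s)\le 0,
\]
the last inequality because $\Phi$ is positive and increasing. Since $\sup\psi_1=0$ and, by the normalization made just before the lemma, $\sup(-\tfrac{\alpha_M}{q}\psi_1)\le C_0$, the argument $w:=-\tfrac{\alpha_M}{q}\psi_1+\Lambda_1$ of $h$ ranges in $[\Lambda_1,C_0+\Lambda_1]$, on which $h\le 0$, so that $\psi_2=-h(w)\ge0$ makes sense.

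Next I would differentiate $\psi_2=-h(w)$ twice. A direct computation gives
\[
\ddc\psi_2=-h''(w)\,\sqrt{-1}\,\partial w\wedge\bar\partial w-h'(w)\,\ddc w ,
\]
and since $\ddc w=-\tfrac{\alpha_M}{q}\ddc\psi_1$ while $-h''(w)\ge 0$ and $\sqrt{-1}\,\partial w\wedge\bar\partial w\ge0$, this yields the pointwise inequality of $(1,1)$-forms
\[
\ddc\psi_2\ \ge\ \frac{\alpha_M}{q}\,h'(w)\,\ddc\psi_1\ =:\ \lambda\,\ddc\psi_1 ,\qquad \lambda=\frac1a\,e^{\frac{c-c_1}{n}}\Phi^{-\frac1n}(w).
\]
The choice of $\Lambda_1$, namely $\Phi(\Lambda_1)^{1/n}=\tfrac1a e^{(c-c_1)/n}$, together with $w\ge\Lambda_1$ and monotonicity of $\Phi$, forces $\lambda\le1$. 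Hence $\omega_{\psi_2}=\omega_M+\ddc\psi_2\ge(1-\lambda)\omega_M+\lambda\,\omega_{\psi_1}\ge\lambda\,\omega_{\psi_1}>0$ — which in particular shows $\psi_2\in Psh(\omega_M)$ — and taking $n$-th powers of nonnegative $(1,1)$-forms, $\omega_{\psi_2}^n\ge\lambda^n\omega_{\psi_1}^n=\lambda^n e^{c_1}\Phi(F)e^F\omega_M^n$.

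Finally, since $(a\lambda)^n=e^{c-c_1}\Phi(w)^{-1}$, the last line gives $a^n\omega_{\psi_2}^n\ge e^{c}\,\Phi(F)\,e^F\,\Phi(w)^{-1}\,\omega_M^n$; recalling $\omega_u^n=e^{F+c}\omega_M^n$ from \eqref{maequation} and that $f=\exp(-\tfrac{\alpha_M}{q}\psi_1+\Lambda_1+c)=e^{w+c}$, the asserted inequality $\omega_u^n\le a^n\omega_{\psi_2}^n+f\omega_M^n$ reduces to the scalar estimate $e^{F}\bigl(1-\Phi(F)/\Phi(w)\bigr)\le e^{w}$ at every point. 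But this is immediate from monotonicity of $\Phi$: if $F\ge w$ the left side is $\le0$, and if $F<w$ then $0<1-\Phi(F)/\Phi(w)<1$, so the left side is $<e^F<e^w$. I expect the only genuinely delicate step to be the pointwise Hessian computation — in particular getting the constant in $\lambda$ exactly right so that the normalization of $\Lambda_1$ produces precisely $\lambda\le1$; the rest is bookkeeping.
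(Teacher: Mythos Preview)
Your proof is correct and follows essentially the same approach as the paper: both compute $\omega_{\psi_2}\ge\lambda\,\omega_{\psi_1}$ with $\lambda=\frac{\alpha_M}{q}h'(w)\le1$ from the concavity of $h$ and the normalization of $\Lambda_1$, and then split into the cases $F\ge w$ (where $a^n\omega_{\psi_2}^n$ alone dominates $\omega_u^n$) and $F<w$ (where $f\omega_M^n$ alone dominates). Your packaging of the final step as the single scalar inequality $e^F(1-\Phi(F)/\Phi(w))\le e^w$ is a slightly cleaner presentation of the same dichotomy.
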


\begin{proof}
    From our choice of $\Lambda_1$, we have $\frac{\alpha}{q}h'(\Lambda_1)=1$. Note that $h$ is an increasing concave function, we have \[\begin{aligned}
            \omega_M+\ddc\psi_2=&\omega_M+\frac{\alpha}{q}h'\ddc\psi_1-\frac{\alpha^2}{q^2}h''\sqrt{-1}\partial\psi_1\wedge\bar{\partial}\psi_1\geq \frac{\alpha}{q}h'(\omega_M+\ddc\psi_1)
    \end{aligned}
    \]In the above inequality we have used that $\frac{\alpha}{q}h'(-\frac{\alpha}{q}\psi_1+\Lambda_1)\leq \frac{\alpha}{q}h'(\Lambda_1)=1.$
    Hence $$\omega_u^n=e^{F+c}\omega_M^n\leq \frac{q^n}{\alpha^n}\frac{1}{h'^n}\frac{e^c}{e^{c_1}}\frac{1}{\Phi(F)}\omega_{\psi_2}^n.$$  Next, for any $a\in(0,1)$,
    if $\frac{q^n}{\alpha^n}\frac{1}{h'^n}\frac{e^c}{e^{c_1}}\frac{1}{\Phi(F)}\leq a^n$, we get \[\omega_u^n\leq a^n\omega_{\psi_2}.\] Otherwise, we have $\frac{q^n}{\alpha^n}\frac{1}{h'^n}\frac{e^c}{e^{c_1}}\frac{1}{\Phi(F)}>a^n$, which is equivalent to $\Phi(F)<\frac{q^n}{a^n\alpha^n}\frac{1}{h'^n(-\alpha q^{-1}\psi_1+\Lambda_1)}e^{c-c_1}=\Phi(-\frac{\alpha}{q}\psi_1+\Lambda_1),$ where we use the definition of $h$ in the last equality. From the monotonicity of $\Phi$, we get $F\leq -\frac{\alpha}{q}\psi_1+\Lambda_1$. So, in this case, we have $e^{F+c}\leq f$ with $f$ stated in the lemma. 
\end{proof}

For $a\in(0,1)$ and ${p_1}>1$,  which will be specified later, let $\tilde{u}:=-u+(1-2a)v+a\psi_2$, %then for $s\geq s_0:=2\sup(a\psi)\leq C\int_{\Lambda_1}^\infty\Phi^{-\frac{1}{n}}(t)dt,$ we can estimate the volume of set 
and $
\Omega_s:=\{\tilde{u}-s>0
\}.$ We use a trick of Cheng-Xu in \cite{CX24}, and consider the auxiliary equation \[
(\omega_M+\ddc\psi_3)^n=\frac{e^{c_{s,j,{p_1}}}\eta_{j}(\tilde{u}-s)f\omega_M^n}{A_{s,j,{p_1}}},\quad  \omega_M+\ddc\psi_3>0,\quad \sup\psi_3=0,
\]where $A_{s,j,{p_1}}=\left(\int_M\eta_{j}^{{p_1}}(\tilde{u}-s)f^{{p_1}}\omega_M^n\right)^{\frac{1}{{p_1}}},$ and $\eta_j(x)$ is a sequence of positive smooth function approximating $\max\{x,0\}$ from above in $L^\infty$ sense. From dominated convergence theorem, it's easy to see \[
\lim_{j\rightarrow \infty}A_{s,j,{p_1}}=\left(\int_{\Omega_s}(\tilde{u}-s)_+^{{p_1}} f^{{p_1}}\omega_M^n\right)^\frac{1}{{p_1}}=:A_{s,{p_1}}.
\] 
Then apply lemma \ref{estimatesupslope} again with $\Phi(x)=e^{({p_1}-1)x}$, the constant $c_{s,j,{p_1}}$ and solution $\psi_{3}$ are uniformly bounded. 
Choose constant $\epsilon_3,\Lambda_3$ suitably, then we have \begin{lemma}
    The inequality holds: 
    
    $$\tilde{u}-s\leq \epsilon_3(\psi_3+\Lambda_3)^{\frac{n}{n+1}}$$ for \[
    \epsilon_3=\left(\frac{n+1}{n}\right)^{\frac{n}{n+1}}e^{-c_{s,j,{p_1}}}A_{s,j,{p_1}}^\frac{1}{n+1}, \quad \Lambda_3=\frac{n}{n+1}\frac{A_{s,j,{p_1}}}{a^{n+1}}e^{-(n+1)c_{s,j,{p_1}}}.
    \]
\end{lemma}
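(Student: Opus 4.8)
The plan is to run the comparison‑function maximum principle of Guo--Phong--Tong \cite{GP23}, in the form streamlined by Cheng--Xu \cite{CX24}. Introduce the test function
$$\Phi := (\tilde u - s) - \epsilon_3\,(\psi_3 + \Lambda_3)^{\frac{n}{n+1}}.$$
Applying Lemma \ref{estimatesupslope} to the auxiliary equation for $\psi_3$ (with the admissible choice $\Phi(x)=e^{(p_1-1)x}$, for which $\int_1^\infty\Phi^{-1/n}<\infty$) shows $c_{s,j,p_1}$ and $\|\psi_3\|_{L^\infty}$ are uniformly bounded; I would therefore enlarge $\Lambda_3$ if necessary so that $\Lambda_3\ge\|\psi_3\|_{L^\infty}$, making $\psi_3+\Lambda_3\ge 0$ on $M$ and $\Phi$ globally defined. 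The assertion is exactly $\Phi\le 0$ on $M$. Suppose not, and let $x_1$ be a point of positive maximum. Since $\psi_3\le 0$ we have $\psi_3+\Lambda_3\le\Lambda_3$, and off $\Omega_s$ one has $\tilde u-s\le 0$, hence $\Phi\le 0$ there; so $x_1$ lies in the open set $\Omega_s$, where $\tilde u-s>0$.

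At $x_1$ the first‑ and second‑order conditions for a maximum, together with the fact that concavity of $t\mapsto t^{n/(n+1)}$ makes the $\sqrt{-1}\partial\psi_3\wedge\bar\partial\psi_3$‑term in $\ddc[(\psi_3+\Lambda_3)^{n/(n+1)}]$ nonpositive (so it may be discarded), yield
$$\ddc\tilde u\ \le\ \mu\,\ddc\psi_3,\qquad \mu:=\epsilon_3\tfrac{n}{n+1}(\psi_3+\Lambda_3)^{-\frac1{n+1}}(x_1)>0.$$
Substituting $\ddc\tilde u=-\omega_u+(1-2a)\omega_v+a\omega_{\psi_2}+a\omega_M$ and $\ddc\psi_3=\omega_{\psi_3}-\omega_M$, and discarding the nonnegative term $(1-2a)\omega_v$ (assuming, as we may, $a\le\tfrac12$), this becomes the pointwise inequality of Hermitian $(1,1)$‑forms
$$a\,\omega_{\psi_2}+(a+\mu)\,\omega_M\ \le\ \omega_u+\mu\,\omega_{\psi_3}\qquad\text{at }x_1.$$

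Next I would pass to densities with respect to $\omega_M^n$: take $\det^{1/n}$ of the last inequality, use super‑additivity of $\det^{1/n}$, and feed in the two Monge--Amp\`ere equations --- namely $\det\omega_{\psi_3}=A_{s,j,p_1}^{-1}e^{c_{s,j,p_1}}\eta_j(\tilde u-s)f$ from the auxiliary equation, and $\det^{1/n}\omega_u\le a\det^{1/n}\omega_{\psi_2}+f^{1/n}$ from Lemma \ref{comparisonlemma} (at $x_1$ one may even use the sharper dichotomy in its proof: either $\det^{1/n}\omega_u\le a\det^{1/n}\omega_{\psi_2}$ or $\det\omega_u\le f$). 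Once the $\omega_{\psi_2}$‑ and $\omega_u$‑contributions are handled via Lemma \ref{comparisonlemma}, one is left with a scalar inequality at $x_1$ in $\mu,\ (\tilde u-s)(x_1),\ f(x_1),\ (\psi_3+\Lambda_3)(x_1),\ e^{c_{s,j,p_1}}$ and $A_{s,j,p_1}$. Unwinding the definition of $\mu$ and using that $\Phi(x_1)>0$ means $\tilde u(x_1)-s>\epsilon_3(\psi_3+\Lambda_3)^{n/(n+1)}(x_1)$, the prescribed values of $\epsilon_3,\Lambda_3$ are exactly the choice that makes this impossible; they are also what produce the clean bound $\epsilon_3\Lambda_3^{n/(n+1)}=A_{s,j,p_1}a^{-n}e^{-(n+1)c_{s,j,p_1}}$, hence $\sup(\tilde u-s)\le A_{s,j,p_1}a^{-n}e^{-(n+1)c_{s,j,p_1}}$, which is the bound to be iterated afterwards. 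Finally let $j\to\infty$, so $\eta_j(\tilde u-s)\to(\tilde u-s)_+$ and $A_{s,j,p_1}\to A_{s,p_1}$.

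The step I expect to be the main obstacle is the determinant reduction and its bookkeeping: one must organize the use of $\det^{1/n}$‑super‑additivity and the invocation of Lemma \ref{comparisonlemma} so that the $\omega_{\psi_2}$‑ and $\omega_u$‑terms really cancel and the residual algebra closes exactly at the exponents $\tfrac{n}{n+1}$ and $n+1$ that are hard‑wired into $\epsilon_3,\Lambda_3$ --- this is the Cheng--Xu computation. A lesser point requiring care is guaranteeing that the positive maximum of $\Phi$ cannot escape $\Omega_s$ (handled above by $\Lambda_3\ge\|\psi_3\|_{L^\infty}$, legitimate by Lemma \ref{estimatesupslope}), plus the routine $j\to\infty$ limit. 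The non‑K\"ahlerness of $\omega_M$ is not an issue here, since everything is a pointwise computation at $x_1$ using only $\ddc$, with no integration by parts where the torsion of $\omega_M$ would enter.
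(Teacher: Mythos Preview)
Your overall strategy (Guo--Phong--Tong comparison function, maximum principle at an interior point of $\Omega_s$, then feed in the auxiliary equation and Lemma~\ref{comparisonlemma}) is exactly the paper's. The gap is a sign: the lemma as stated has a typo and should read $(-\psi_3+\Lambda_3)^{n/(n+1)}$, which is what the paper actually proves. You took the statement literally and built $\Phi=(\tilde u-s)-\epsilon_3(\psi_3+\Lambda_3)^{n/(n+1)}$, and this sign makes the determinant step fail.

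Concretely: with $-\psi_3+\Lambda_3$ one gets, at the maximum,
\[
\omega_u\ \ge\ (1-2a)\omega_v+a\,\omega_{\psi_2}+\mu'\,\omega_{\psi_3}+(a-\mu')\,\omega_M,
\qquad \mu'=\epsilon_3\tfrac{n}{n+1}(-\psi_3+\Lambda_3)^{-1/(n+1)}\le a,
\]
(the last inequality is exactly what the stated value of $\Lambda_3$ is chosen to achieve, so no ``enlarging'' of $\Lambda_3$ is needed). Dropping the nonnegative $\omega_v$- and $\omega_M$-terms and taking $n$-th powers via Minkowski gives
\[
\omega_u^n\ \ge\ a^n\omega_{\psi_2}^n+(\mu')^n\omega_{\psi_3}^n.
\]
Now Lemma~\ref{comparisonlemma}, in the form $\omega_u^n\le a^n\omega_{\psi_2}^n+f\omega_M^n$, cancels $a^n\omega_{\psi_2}^n$ \emph{exactly}, leaving $f\omega_M^n\ge(\mu')^n\omega_{\psi_3}^n$; substituting $\omega_{\psi_3}^n=A_{s,j,p_1}^{-1}e^{c_{s,j,p_1}}\eta_j(\tilde u-s)f\,\omega_M^n$ and dividing by $f$ yields the claim with the stated $\epsilon_3$.

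With your sign, the second-order condition gives instead
\[
a\,\omega_{\psi_2}+(a+\mu)\,\omega_M\ \le\ \omega_u+\mu\,\omega_{\psi_3},
\]
so $\omega_{\psi_3}$ sits on the \emph{upper} side of the inequality. Super-additivity of $\det^{1/n}$ only gives \emph{lower} bounds for $\det^{1/n}$ of a sum, hence it cannot produce an upper bound for $\det^{1/n}(\omega_u+\mu\omega_{\psi_3})$; and Lemma~\ref{comparisonlemma} bounds $\omega_u^n$ from above, which is useless for bounding $\omega_u+\mu\omega_{\psi_3}$ from above. There is no route from your displayed inequality to an upper bound on $\omega_{\psi_3}^n$ (equivalently on $\eta_j(\tilde u-s)$), so the ``determinant reduction and bookkeeping'' you flag as the main obstacle cannot close. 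Simply switch to $-\psi_3+\Lambda_3$ and the rest of your outline goes through exactly as in the paper.
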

\begin{proof}
    Without loss of generality, we can assume the maximum of $\tilde{u}-s-\epsilon_3(-\psi_{3}+\Lambda_3)^{\frac{n}{n+1}}$ is attained at some point $x_0\in \Omega_s$; otherwise the inequality holds automatically. Then at $x_0$, by maximum principle, we have \begin{equation}
        \begin{aligned}
            0\geq& \ddc\tilde{u}+\epsilon_3\frac{n}{n+1}(-\psi_3+\Lambda_3)^{-\frac{1}{n+1}}\ddc\psi_3+\epsilon\frac{n^2}{n+1}\sqrt{-1}\partial\psi_3\wedge\bar{\partial}\psi_3\\
            \geq&-\omega_u+(1-2a)\omega_v+a\omega_{\psi_2}+\epsilon_3\frac{n}{n+1}(-\psi_3+\Lambda_3)^{-\frac{1}{n+1}}\omega_{\psi_3}+(a-\epsilon_3\frac{n}{n+1}(\Lambda_3)^{-\frac{1}{n+1}})\omega_M
        \end{aligned}
    \end{equation}
    From our choice of $\Lambda_3$, the last term above is nonnegative. So we have \begin{equation}\begin{aligned}\label{auximaxprin}
                \omega_u^n\geq &a^n\omega_\psi^n+\left(\epsilon_3\frac{n}{n+1}\right)^n(-\psi_3+\Lambda_3)^{-\frac{n}{n+1}}\omega_{\psi_3}^n\\
                =&a^n\omega_\psi^n+\left(\epsilon_3\frac{n}{n+1}\right)^n\frac{\eta_j(\tilde{u}-s)}{(-\psi_3+\Lambda_3)^{\frac{n}{n+1}}}\frac{e^{c_{s,j,{p_1}}}f\omega_M^n}{A_{s,j,{p_1}}}
    \end{aligned}
    \end{equation}
    Then apply lemma \ref{comparisonlemma}  to the left-hand side of \eqref{auximaxprin}, we get \[
    \eta_{j}(\tilde{u}-s)\leq \left(\epsilon_3\frac{n}{n+1}\right)^{-n}e^{-c_{s,j,{p_1}}}A_{s,j,{p_1}}(-\psi_3+\Lambda_3)^{\frac{n}{n+1}}.
    \]Note that $\eta_j(x)\geq \max\{x,0\}$ and from our choice of $\epsilon_3$, we have $\epsilon_3=\left(\epsilon_3\frac{n}{n+1}\right)^{-n}e^{-c_{s,j,{p_1}}}A_{s,j,{p_1}}$. The inequality in the lemma follows.
\end{proof}

Let $\phi(s)=\int_{\Omega_s}f\omega_M^n$, one can estimate $A_{s,{p_1}}$ as 
\begin{lemma}
    If $\frac{A_{s,{p_1}}}{a^{n+1}}\leq 1$, and fixed $p_1>1$ close to $1$, one has $A_{s,{p_1}}\leq Ce^{A_1\Lambda_1}\phi(s)^{1+\delta_n}$ for some uniform constant $C$, and dimentional constant  $A_1, \delta_n>0$.
\end{lemma}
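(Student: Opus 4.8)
The plan is to extract from the preceding lemma a pointwise bound of $(\tilde u-s)_+$ in terms of a small power of $A_{s,{p_1}}$ itself, and then feed that bound back into the definition of $A_{s,{p_1}}$, exploiting that $f$ is comparable to the constant $e^{\Lambda_1+c}$ on $\Omega_s$.

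First I would collect the uniform bounds already in hand: $e^{\pm c_{s,j,{p_1}}}$ and $\|\psi_3\|_{L^\infty}$ are uniform, and because $A_{s,j,{p_1}}\to A_{s,{p_1}}$ while $A_{s,{p_1}}/a^{n+1}\le 1$ by hypothesis, for all large $j$ one has $\Lambda_3=\frac{n}{n+1}\frac{A_{s,j,{p_1}}}{a^{n+1}}e^{-(n+1)c_{s,j,{p_1}}}\le C$ with $C$ uniform. Since $-\psi_3\ge 0$ (as $\sup\psi_3=0$) and $\Lambda_3>0$, the preceding lemma then yields, on all of $M$,
\[
(\tilde u-s)_+\le \epsilon_3\,(-\psi_3+\Lambda_3)^{\frac n{n+1}}\le \epsilon_3\,(\|\psi_3\|_{L^\infty}+\Lambda_3)^{\frac n{n+1}}\le C\,A_{s,j,{p_1}}^{\frac1{n+1}},
\]
using $\epsilon_3=(\tfrac{n+1}{n})^{\frac n{n+1}}e^{-c_{s,j,{p_1}}}A_{s,j,{p_1}}^{1/(n+1)}$ in the last step. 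As $(\tilde u-s)_+$ does not depend on $j$ and $A_{s,j,{p_1}}\to A_{s,{p_1}}$, letting $j\to\infty$ gives the key inequality $(\tilde u-s)_+\le C\,A_{s,{p_1}}^{1/(n+1)}$ on $M$.

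Next, since $\|\psi_1\|_{L^\infty}$ is uniform and $\sup\psi_1=0$, I have $e^{\Lambda_1+c}\le f\le C\,e^{\Lambda_1+c}$ pointwise, so that $f^{{p_1}-1}\le C\,e^{({p_1}-1)\Lambda_1}$ on $M$ and $\int_{\Omega_s}f^{{p_1}}\,\omega_M^n\le C\,e^{({p_1}-1)\Lambda_1}\int_{\Omega_s}f\,\omega_M^n=C\,e^{({p_1}-1)\Lambda_1}\phi(s)$. Assuming $A_{s,{p_1}}>0$ (otherwise there is nothing to prove), I would insert the pointwise bound into the definition of $A_{s,{p_1}}$:
\[
A_{s,{p_1}}^{{p_1}}=\int_{\Omega_s}(\tilde u-s)_+^{{p_1}}f^{{p_1}}\,\omega_M^n\le C\,A_{s,{p_1}}^{\frac{{p_1}}{n+1}}\int_{\Omega_s}f^{{p_1}}\,\omega_M^n\le C\,e^{({p_1}-1)\Lambda_1}\,A_{s,{p_1}}^{\frac{{p_1}}{n+1}}\,\phi(s).
\]
Dividing by $A_{s,{p_1}}^{{p_1}/(n+1)}$ leaves $A_{s,{p_1}}^{{p_1}n/(n+1)}\le C\,e^{({p_1}-1)\Lambda_1}\phi(s)$, and raising to the power $\tfrac{n+1}{{p_1}n}$ yields $A_{s,{p_1}}\le C\,e^{\frac{({p_1}-1)(n+1)}{{p_1}n}\Lambda_1}\,\phi(s)^{\frac{n+1}{{p_1}n}}$, which is the assertion with $A_1=\frac{({p_1}-1)(n+1)}{{p_1}n}$ and $\delta_n=\frac{n+1}{{p_1}n}-1$; the latter is positive precisely because ${p_1}$ is fixed close to $1$, i.e. ${p_1}<\tfrac{n+1}{n}$.

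The step I expect to be the main obstacle is the first one, namely obtaining the pointwise estimate with a genuinely uniform constant. This hinges on two features of the construction that must be invoked carefully: the uniform $L^\infty$-bound for the auxiliary potential $\psi_3$, which comes from Lemma~\ref{estimatesupslope} applied to the equation solved by $\psi_3$ — whose density has $L^{{p_1}}$-norm exactly $1$ by the very definition of $A_{s,j,{p_1}}$ — and the bound $\Lambda_3\le C$, which is exactly the point where the hypothesis $A_{s,{p_1}}/a^{n+1}\le 1$ is consumed. Everything else is H\"older's inequality and bookkeeping of the $\Lambda_1$-dependence, and the limit $j\to\infty$ is harmless because $\eta_j(\tilde u-s)\ge(\tilde u-s)_+$ and $A_{s,j,{p_1}}\to A_{s,{p_1}}$.
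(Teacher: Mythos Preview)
Your proof is correct and in fact more streamlined than the paper's. Both arguments rest on the same ingredients: the uniform bounds on $\psi_3$, $\Lambda_3$, and $c_{s,j,{p_1}}$ coming from Lemma~\ref{estimatesupslope}, the pointwise inequality $(\tilde u-s)_+\le \epsilon_3(-\psi_3+\Lambda_3)^{n/(n+1)}$ from the preceding lemma, and the pointwise control $f\le Ce^{\Lambda_1}$ (which follows from $\|\psi_1\|_{L^\infty}\le C_0$). The difference is that the paper does not immediately bound $(-\psi_3+\Lambda_3)^{n/(n+1)}$ by a constant; it carries this factor into the integral $\int_{\Omega_s}(-\psi_3+\Lambda_3)^{n{p_1}/(n+1)}f^{{p_1}}\omega_M^n$ and then applies a further H\"older inequality with an auxiliary exponent $p_2>n+1$, only afterwards invoking the boundedness of $\psi_3$ to control the resulting factor. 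This yields the exponent $(1-\tfrac{1}{p_2})\tfrac{n+1}{n{p_1}}$ on $\phi(s)$, strictly smaller than your $\tfrac{n+1}{n{p_1}}$, so your route actually gives a slightly sharper $\delta_n=\tfrac{n+1}{n{p_1}}-1$ and makes the paper's remark that ``$\delta_n$ can be chosen close to $\tfrac1n$'' immediate. Since both arguments already use the $L^\infty$ bound on $\psi_3$, the extra $p_2$ in the paper is redundant and your simplification loses nothing.
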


\begin{proof}
When $\frac{A_{s,{p_1}}}{a^{n+1}}\leq 1$,  the constant $\Lambda_3$ is uniformly bounded from above. Therefore \begin{equation}\label{Asbetaestimate}
    \begin{aligned}
        A_{s,{p_1}}=&\left(\int_{M}(\tilde{u}-s)_+^{p_1} f^{p_1}\omega_M^n\right)^{\frac{1}{{p_1}}}\\
        \leq &C_nA_{s,j,{p_1}}^{\frac{1}{n+1}}\left(\int_{\Omega_s}(-\psi_3+\Lambda_3)^\frac{n{p_1}}{n+1}f^{{p_1}}\omega_M^n\right)^{\frac{1}{{p_1}}}\quad \text{ (for $j$ large enough)}\\
        \leq& CA_{s,j,{p_1}}^{\frac{1}{n+1}}\left(\int_M(-\psi_3+1)^{\frac{n{p_1} p_2}{n+1}}f^{({p_1}-1)p_2+1}\omega_M^n\right)^{\frac{1}{{p_1} p_2}}\left(\int_{\Omega_s}f\omega_M^n\right)^{(1-\frac{1}{p_2})\frac{1}{p_1}}
    \end{aligned}
\end{equation}
    We note that form our construction of $f$,  we have $||f||_{L^q}\leq Ce^{\Lambda_1}$ for some uniform constant $C$. Note that $\psi_3$ is uniformly bounded, one can estimate the term \begin{equation}\begin{aligned}
             \left(\int_M(-\psi_3+1)^{\frac{n{p_1} p_2}{n+1}}f^{({p_1}-1)p_2+1}\omega_M^n\right)^{\frac{1}{{p_1} p_2}}\leq&C\left(\int_Mf^{(p_1-1)p_2+1}\omega_M^n\right)^{\frac{1}{{p_1} p_2}}
             \leq Ce^{\frac{\Lambda_1((p_1-1)p_2+1)}{{p_1} p_2}}
    \end{aligned}
    \end{equation}
    Hence equation \eqref{Asbetaestimate} becomes \begin{equation}
        A_{s,{p_1}}\leq CA_{s,j,{p_1}}^{\frac{1}{n+1}}e^{\frac{\Lambda_1((p_1-1)p_2+1)}{{p_1} p_2}}\phi(s)^{(1-\frac{1}{p_2})\frac{1}{p_1}}.
    \end{equation}
    Let $j\rightarrow \infty$, we get $A_{s,{p_1}}\leq Ce^{\frac{\Lambda_1((p_1-1)p_2+1)(n+1)}{{p_1} p_2n}}\phi(s)^{(1-\frac{1}{p_2})\frac{n+1}{np_1}}.$
    Now we specify the choice of $ p_1, p_2$. First we fix  $p_1$ close to $1$, and $p_2>n+1$, then there exists a constant $\delta_n>0$ depending only on $n$ such that $(1-\frac{1}{p_2})\frac{n+1}{np_1}=1+\delta_n$. We remark that $\delta_n$ can be chosen close to $\frac{1}{n}$ but strictly less than $\frac{1}{n}$ by choosing $p_2, p_2$ properly.  The lemma follows with $A_1={\frac{((p_1-1)p_2+1)(n+1)}{{p_1} p_2n}}$. 
\end{proof}

Next, we discuss the case when $\frac{A_{s,{p_1}}}{a^{n+1}}\geq 1 $. Since $A_{s,{p_1}}$ is decreasing in $s$, let $s_1$ be the largest positive number such that $A_{s,{p_1}}\geq a^{n+1}.$ Without loss of generality, we can assume that $A_{s_1,{p_1}}=a^{n+1}$. Then we have \begin{lemma}
    $s_1\leq \max\{s_0,Ce^{A_2\Lambda_1}\frac{||(v-u)_+||_{L^1}}{a^{(n+1){p_1} p_4}}\},$ where $s_0=2\sup(a\psi)+4a||v||_{L^\infty}$, $C$ is an uniform constant, $A_2$ is a dimensional constant, and $p_4$ is a constant with $\frac{1}{p_4}+\frac{{p_1}}{q}=1.$
\end{lemma}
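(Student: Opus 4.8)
The plan is to dispose of the trivial alternative $s_1\le s_0$ and, when $s_1>s_0$, to identify $s_1$ by squeezing the $\omega_M^n$-volume $\vol(\Omega_{s_1})$ between a lower bound forced by the normalization $A_{s_1,{p_1}}=a^{n+1}$ and an upper bound controlled by $\|(v-u)_+\|_{L^1}$. First I would observe that $s\mapsto A_{s,{p_1}}$ is continuous (dominated convergence) and vanishes once $\Omega_s$ is empty, so in the case under discussion $s_1$ is well defined with $A_{s_1,{p_1}}=a^{n+1}$, and in particular $\vol(\Omega_{s_1})>0$ (here $\vol$ denotes the $\omega_M^n$-measure).

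The single identity that does the work is $\tilde u=(v-u)+a(\psi_2-2v)$. On $\Omega_{s_1}$ one has $\tilde u>s_1$; since $\psi_2\ge 0$, $v\ge 0$, and $\sup(a\psi_2)\le\tfrac12 s_0<\tfrac12 s_1$ (by the form of $s_0$ and $s_1>s_0$), this identity gives two things simultaneously. On one hand,
\[
v-u=\tilde u-a\psi_2+2av\ \ge\ \tilde u-\sup(a\psi_2)\ >\ s_1-\tfrac12 s_1\ =\ \tfrac12 s_1\ >\ 0,
\]
so $\Omega_{s_1}\subset\{v>u\}$ and $\tfrac12 s_1\vol(\Omega_{s_1})\le\int_{\Omega_{s_1}}(v-u)\,\omega_M^n\le\|(v-u)_+\|_{L^1}$. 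On the other hand, since $a(\psi_2-2v)\le\sup(a\psi_2)<s_1$, the same identity gives $0\le\tilde u-s_1\le v-u\le\|v\|_{L^\infty}+\|u\|_{L^\infty}=:M_1$ on $\Omega_{s_1}$, and $M_1$ is uniform (depending in addition on $\|v\|_{L^\infty}$) by Lemma \ref{estimatesupslope}. The point I want to stress is that using $s_1>s_0$ twice in this way is the only real idea in the proof: it makes $v-u\ge\tfrac12 s_1$ and $\tilde u-s_1\le v-u$ available at the same time, so that neither $\psi_2$ nor $\Lambda_1$ needs to be controlled in these two inequalities.

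It remains to bound $\vol(\Omega_{s_1})$ from below. From $a^{(n+1){p_1}}=A_{s_1,{p_1}}^{{p_1}}=\int_{\Omega_{s_1}}(\tilde u-s_1)^{{p_1}}f^{{p_1}}\,\omega_M^n$, I would use $(\tilde u-s_1)^{{p_1}}\le M_1^{{p_1}}$ and then H\"older's inequality with exponents $q/{p_1}$ and $p_4$, where $\tfrac{1}{p_4}+\tfrac{{p_1}}{q}=1$, together with the bound $\|f\|_{L^q}\le Ce^{\Lambda_1}$ established in the proof of the previous lemma, to obtain
\[
a^{(n+1){p_1}}\ \le\ M_1^{{p_1}}\Big(\int_{\Omega_{s_1}}f^q\,\omega_M^n\Big)^{{p_1}/q}\vol(\Omega_{s_1})^{1/p_4}\ \le\ (M_1C)^{{p_1}}e^{{p_1}\Lambda_1}\,\vol(\Omega_{s_1})^{1/p_4},
\]
and hence $\vol(\Omega_{s_1})\ge (M_1C)^{-{p_1}p_4}e^{-{p_1}p_4\Lambda_1}\,a^{(n+1){p_1}p_4}$. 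Combining this with $\tfrac12 s_1\vol(\Omega_{s_1})\le\|(v-u)_+\|_{L^1}$ yields
\[
s_1\ \le\ 2(M_1C)^{{p_1}p_4}e^{{p_1}p_4\Lambda_1}\,\frac{\|(v-u)_+\|_{L^1}}{a^{(n+1){p_1}p_4}},
\]
which, together with the trivial case $s_1\le s_0$, is the asserted bound with $A_2={p_1}p_4$ playing the role of the dimensional constant and the prefactor uniform. I do not expect any genuine obstacle: beyond the decision to estimate $\vol(\Omega_{s_1})$ rather than $\int_{\Omega_{s_1}}(\tilde u-s_1)$ directly, and matching the H\"older exponents to the definitions of $p_4$ and of $\|f\|_{L^q}$, the remaining work is routine bookkeeping of constants.
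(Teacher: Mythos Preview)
Your argument is correct and is essentially the same as the paper's. Both proofs first use $s_1>s_0$ together with the identity $\tilde u=(v-u)+a(\psi_2-2v)$ to obtain $v-u>\tfrac12 s_1$ on $\Omega_{s_1}$ and a uniform bound on $(\tilde u-s_1)_+$, and then combine the normalization $A_{s_1,p_1}=a^{n+1}$ with H\"older and $\|f\|_{L^q}\le Ce^{\Lambda_1}$ to conclude. The only cosmetic difference is that you route the H\"older step through the volume $\vol(\Omega_{s_1})$ (bounding it below from $a^{(n+1)p_1}$ and above by $2\|(v-u)_+\|_{L^1}/s_1$), whereas the paper inserts the factor $\bigl((v-u)_+/s_1\bigr)^{1/p_4}\ge 1$ directly into $\int_{\Omega_{s_1}}f^{p_1}$ and applies H\"older in one stroke; the resulting bounds and the constant $A_2=p_1p_4$ coincide.
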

\begin{proof}
    We need only to show that when $s_1\geq s_0$, there exists uniform constants $C,A_2$ such that $s_1\leq Ce^{A_2\Lambda_1}\frac{||(v-u)_+||_{L^1}}{a^{(n+1){p_1} p_4}}.$
On $\Omega_{s_1}$, we have $-u+(1-2a)v+a\psi\geq s_0$, so $-u+v\geq \frac{1}{2}s_0$ from our choice of $s_0=2\sup a\psi+4a||v||_{L^\infty}$. Then one can estimates $s_1$ from above as follows, choose $p_4>1$ with $\frac{1}{p_4}+\frac{{p_1}}{q}=1,$
 \begin{equation}
        \begin{aligned}
            a^{n+1}=A_{s_1,{p_1}}=&\left(\int_{\Omega_{s_1}}(-u+(1-2a)v+a\psi-s_1)_+^{p_1} f^{{p_1}}\omega_M^n\right)^{\frac{1}{{p_1}}}
            \leq C\left(\int_{\Omega_{s_1}}f^{{p_1}}\omega_M^n\right)^{\frac{1}{{p_1}}}\\
            \leq&C\left(\int_{\Omega_{s_1}}\frac{(v-u)_+^{\frac{1}{p_4}}}{s_1^{\frac{1}{p_4}}}f^{{p_1}}\omega_M^n\right)^{\frac{1}{{p_1}}}\leq Cs_1^{-\frac{1}{p_4{p_1}}}||(v-u)_+||_{L^1}^{\frac{1}{p_4{p_1}}}e^{\Lambda_1}.
        \end{aligned}
    \end{equation}
    The lemma follows with $A_2={p_1} p_4.$
\end{proof}
The next step is to estimate $\phi(s_0)$.\begin{lemma}
    There exists uniform constants $C, A_3$ such that $\phi(s_0)\leq Ce^{A_3\Lambda_1}||(v-u)_+||_{L^1}^\frac{1}{q^*},$ where $\frac{1}{q^*}+\frac{1}{q}=1.$ 
\end{lemma}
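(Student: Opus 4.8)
The plan is to bound $\phi(s_0) = \int_{\Omega_{s_0}} f\,\omega_M^n$ by an appropriate power of $\|(v-u)_+\|_{L^1}$ using the definition of $s_0$ and Hölder's inequality, exactly in the spirit of the estimate for $s_1$ in the previous lemma. First I would observe that on $\Omega_{s_0}$ we have $-u + (1-2a)v + a\psi_2 > s_0 = 2\sup(a\psi_2) + 4a\|v\|_{L^\infty}$, which (since $a\psi_2 \le \sup(a\psi_2)$ and $(1-2a)v \ge -2a\|v\|_{L^\infty}$, using $v \ge 0$) forces $v - u > \tfrac{1}{2}s_0 > 0$ on $\Omega_{s_0}$; in particular $(v-u)_+ = v-u$ there, and $\mathbf 1_{\Omega_{s_0}} \le \tfrac{2}{s_0}(v-u)_+$ pointwise on $\Omega_{s_0}$. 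Since $s_0$ is a uniform constant (it depends only on $\sup(a\psi_2)$ and $\|v\|_{L^\infty}$, both controlled), it may be absorbed into the constant $C$.

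Next I would apply Hölder's inequality with the exponent pair $(q, q^*)$ where $\tfrac1q + \tfrac1{q^*} = 1$:
\[
\phi(s_0) = \int_{\Omega_{s_0}} f\,\omega_M^n \le \|f\|_{L^q}\,\big(\vol_{\omega_M}(\Omega_{s_0})\big)^{\frac{1}{q^*}}.
\]
By the construction of $f = \exp(-\tfrac{\alpha}{q}\psi_1 + \Lambda_1 + c)$ together with Lemma~\ref{estimatesupslope} controlling $c$ and $\|\psi_1\|_{L^\infty}$, we have $\|f\|_{L^q} \le C e^{\Lambda_1}$ for a uniform constant $C$, as already noted in the excerpt. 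For the volume factor, I would use the pointwise bound just derived:
\[
\vol_{\omega_M}(\Omega_{s_0}) = \int_{\Omega_{s_0}} \omega_M^n \le \frac{2}{s_0}\int_{\Omega_{s_0}} (v-u)_+\,\omega_M^n \le \frac{2}{s_0}\,\|(v-u)_+\|_{L^1}.
\]
Combining the two displays gives $\phi(s_0) \le C e^{\Lambda_1}\,\|(v-u)_+\|_{L^1}^{1/q^*}$, which is the claimed inequality with $A_3 = 1$ (any larger dimensional constant also works, so I would just write $A_3$).

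I do not expect a genuine obstacle here: this lemma is the easiest of the sequence, since it only uses the defining property of $s_0$ and the $L^q$-bound on $f$, with no auxiliary Monge-Ampère equation involved. The only point requiring a little care is the sign bookkeeping that turns the condition defining $\Omega_{s_0}$ into the lower bound $v - u > \tfrac12 s_0$ — one must remember that $\psi_2$ is nonnegative (as arranged before Lemma~\ref{comparisonlemma}) and $v$ is nonnegative (as $v \in Psh(\omega_M)\cap C^\infty(M,\RR^+)$), so the worst-case contributions of $a\psi_2$ and $(1-2a)v$ are exactly what $s_0$ was designed to dominate. Everything else is a single application of Hölder.
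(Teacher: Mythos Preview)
Your overall strategy matches the paper's: use that $v-u>\tfrac12 s_0$ on $\Omega_{s_0}$ and then apply H\"older with the pair $(q,q^*)$, together with $\|f\|_{L^q}\le Ce^{\Lambda_1}$. Whether one first bounds $\vol_{\omega_M}(\Omega_{s_0})$ (your version) or inserts $(v-u)_+^{1/q^*}$ into the integrand (the paper's version) is immaterial; both give
\[
\phi(s_0)\;\le\; C\,s_0^{-1/q^*}\,e^{\Lambda_1}\,\|(v-u)_+\|_{L^1}^{1/q^*}.
\]

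The one genuine gap is your claim that ``$s_0$ is a uniform constant \dots\ it may be absorbed into the constant $C$.'' This is not true: recall $s_0=2\sup(a\psi_2)+4a\|v\|_{L^\infty}$, and both $a$ and $\psi_2$ depend on $\Lambda_1$ through the relation $\Phi^{1/n}(\Lambda_1)=a^{-1}e^{(c-c_1)/n}$; in fact $a\sim\Lambda_1^{-\alpha}$ and $\sup(a\psi_2)\sim\Lambda_1^{-\alpha}$, so $s_0\to 0$ as $\Lambda_1\to\infty$. You therefore cannot absorb $s_0^{-1/q^*}$ into a constant independent of $\Lambda_1$, and in particular the conclusion $A_3=1$ is unjustified. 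The paper closes this gap by computing explicitly
\[
s_0\;\ge\;2\sup(a\psi_2)\;\ge\;C\int_{\Lambda_1}^{C_0+\Lambda_1}\Phi^{-1/n}(t)\,dt\;\ge\;C(\Lambda_1+C_0)^{-\alpha},
\]
so that $s_0^{-1/q^*}\le C(\Lambda_1+C_0)^{\alpha/q^*}$, which is only polynomial in $\Lambda_1$ and hence dominated by $e^{\varepsilon\Lambda_1}$; this is what produces the stated exponential factor $e^{A_3\Lambda_1}$. Once you add this lower bound for $s_0$, your argument is complete and essentially identical to the paper's.
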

\begin{proof}Note that on $\Omega_{s_0}$, we have $-u+v\geq \frac{1}{2}s_0$ with a similar argument as above. So 
    \begin{equation}
        \begin{aligned}
          \phi(s_0)=\int_{\Omega_{s_0}}f\omega_M^n\leq\int_M \frac{(-u+v)_+^{\frac{1}{q^*}}}{s_0^{\frac{1}{q^*}}}f\omega_M^n\leq s_0^{-\frac{1}{q^*}}e^{\Lambda_1}||(v-u)_+||_{L^1}^\frac{1}{q^*} . 
        \end{aligned}
    \end{equation}
    Note that $s_0\geq 2\sup (a\psi_2)=-rh(\Lambda_1)\geq C\int_{\Lambda_1}^{C_0+\Lambda_1}\Phi^{-\frac{1}{n}}(x)dx\geq CC_0(\Lambda_1+C_0)^{-\alpha}$. So the lemma follows.
\end{proof}

Now we can close the proof of proposition \ref{stabilityestimate} by choosing $\Lambda_1$ appropriately. We need the following De Giorgi iteration lemma \begin{lemma}[De Giorgi iteration]\label{DeGiorgi}Let $\phi(s):[0,\infty)\rightarrow [0,\infty)$ be a decreasing and continuous function which satisfies $t\phi(s+t)\leq A_0\phi(s)^{1+\delta}$ for some constant $A_0>0,\delta>0$ and all $t>0,s\geq s_0>0$. Then $\phi(s)=0$ for \[
s\geq s_\infty:=s_0+\frac{2A_0\phi(s_0)^{\delta}}{1-2^{-\delta}}.
\]
\end{lemma}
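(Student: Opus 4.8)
The plan is to run the classical De Giorgi iteration on a sequence of thresholds $s_0 < s_1 < s_2 < \cdots$ with \emph{geometrically shrinking} increments chosen so that the increments sum to exactly $s_\infty - s_0$. If $\phi(s_0) = 0$, then $s_\infty = s_0$ and monotonicity of $\phi$ already gives $\phi \equiv 0$ on $[s_\infty,\infty)$, so assume $\phi(s_0) > 0$. I would set
\[
t_k := 2 A_0\, \phi(s_0)^{\delta}\, 2^{-k\delta} \ (>0), \qquad s_{k+1} := s_k + t_k ,
\]
so that $s_k \nearrow s_0 + 2A_0\phi(s_0)^\delta \sum_{k\ge 0} 2^{-k\delta} = s_\infty$, and write $a_k := \phi(s_k)$; note $s_k \ge s_0$ for every $k$, so the hypothesis of the lemma is applicable at each threshold.

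The core step is the claim that $a_k \le \phi(s_0)\, 2^{-k}$ for all $k \ge 0$, proved by induction on $k$. The case $k = 0$ is an equality. For the inductive step, apply the assumed inequality with $s = s_k$ and $t = t_k$ to get $t_k\, a_{k+1} = t_k\, \phi(s_k + t_k) \le A_0\, a_k^{1+\delta}$; plugging in the inductive hypothesis and the definition of $t_k$,
\[
a_{k+1} \le \frac{A_0\, a_k^{1+\delta}}{t_k} \le \frac{A_0 \big(\phi(s_0) 2^{-k}\big)^{1+\delta}}{2 A_0\, \phi(s_0)^{\delta}\, 2^{-k\delta}} = \phi(s_0)\, 2^{-(k+1)} ,
\]
which closes the induction. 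The exponents of $\phi(s_0)$ and of $2^{-k}$ in the definition of $t_k$ are dictated precisely by the requirement that this cancellation be exact.

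Letting $k \to \infty$ gives $\phi(s_k) \to 0$. Since $\phi$ is decreasing and $s_k < s_\infty$, we have $0 \le \phi(s_\infty) \le \phi(s_k) \to 0$, hence $\phi(s_\infty) = 0$; by monotonicity $\phi(s) = 0$ for every $s \ge s_\infty$, which is the assertion. (Continuity of $\phi$ is not even needed, only that it is decreasing.)

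There is no serious obstacle here; the only point requiring care is the choice of the increments $t_k$. A cruder choice, e.g.\ dyadic increments $t_k = c\, 2^{-k}$, still forces $\phi$ to vanish eventually but yields a strictly larger value of $s_\infty$; matching the stated constant $\frac{2A_0\phi(s_0)^\delta}{1 - 2^{-\delta}}$ is exactly what pins down $t_k = 2A_0\phi(s_0)^\delta 2^{-k\delta}$ together with the decay bound $\phi(s_k) \le \phi(s_0) 2^{-k}$.
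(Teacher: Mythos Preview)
Your proof is correct and is the standard De Giorgi iteration argument. The paper itself does not supply a proof of this lemma: it is stated as a well-known tool and then immediately applied, so there is no ``paper's proof'' to compare against. Your choice of increments $t_k = 2A_0\phi(s_0)^\delta 2^{-k\delta}$ and the induction $\phi(s_k)\le \phi(s_0)2^{-k}$ is exactly the classical route, and your observation that continuity is not actually needed (monotonicity suffices) is also accurate.
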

From De Giorgi iteration lemma, we get \begin{equation}
    \begin{aligned}\tilde u&\leq s_1+Ce^{A_1\Lambda_1}\phi(s_1)^{\delta_n}\\
    &\leq \max\{s_0,Ce^{A_2\Lambda_1}\frac{||(v-u)_+||_{L^1}}{a^{(n+1){p_1} p_4}}\}+Ce^{A_1\Lambda_1}\phi(s_0)^{\delta_n}.\end{aligned}
\end{equation}
Choose $\Lambda_1=\frac{1}{B}\log(||(v-u)_+||_{L^1})$ for some uniform large $B$. Then \[
\sup(a\psi_2)=\frac{q}{\alpha_M}e^{\frac{c-c_1}{n}}\int_{\Lambda_1}^{C_0+\Lambda_1}x^{-\alpha}dx\leq C\Lambda_1^{-\alpha}.
\] 
The relation between $a$ and $\Lambda_1$ is $\Phi^\frac{1}{n}(\Lambda_1)=\frac{1}{a}e^{\frac{c-c_1}{n}}$.  So  we also have $a\leq C\Lambda_1^{-\alpha}$. Therefore, we get the desired estimate \begin{equation}
    \sup(v-u)\leq C\Lambda_1^{-\alpha}+Ce^{C\Lambda_1}\leq\frac{C}{ |\log(||(v-u)_+||_{L^1})|^{\alpha}}.
\end{equation}

\begin{remark}\begin{enumerate}
    \item As this note was being written, we note that Fang also proved similar stability results in \cite{Fang25}. However, our approach is completely different, where her argument is based on the pluripotential theory, while here we only use the PDE techniques. 
    \item It's not hard to extend the above stability estimate to a more general class of complex hessian type equations satisfying the determinant domination condition introduced by Guo-Phong-Tong \cite{GP23}. We leave it for interested readers. The difficulty of using such a stability estimate to get the modulus of continuity comes from that we don't have a good approximation of $m$-subharmonic functions. See \cite{CX22} for the recent progress. 
    \item We only use the condition $\alpha<\frac{p}{n}-1$ in order to construct a comparison Hermitian metric $\omega_M+\ddc\psi_1$ which satisfies the condition \begin{equation}
        (\omega_M+\ddc\psi_1)\geq \delta_0\Phi(F)e^{F}\omega_M^n
 \text{ for some uniform positive constant }\delta_0,   \end{equation} and for some positive increasing function $\Phi$ which satisfies $\int_0^\infty\Phi^{-\frac{1}{n}}(t)dt<\infty$.

 Another method to construct such $\omega_M+\ddc\psi_1$ is by solving the Monge-Amp\`ere equation directly, but one needs to control a lower bound of the constant $e^c$, (defined by Guo-Song \cite{GS24}, the sup-slope of the equations) appearing in solving the equation. It would be interesting to get more information about it. The method we applied above is to construct such Hermitian metric locally and then glue it to a global one. We are only able to glue it when the locally constructed metric potential is uniformly bounded. 

 Another concept used by Geudj-Lu in \cite{GL2} to get a lower bound of $e^c$ is \begin{equation}
     v_-(\omega):=\inf\{\int_{M}(\omega+\ddc\varphi)^n|\varphi\in Psh(M,\omega)\cap L^{\infty}\}.
 \end{equation}
 It's clear that $e^c\geq v_-(\omega_M)$ for any smooth $F$ with $\int_Me^F\omega_M^n=1$. In some cases, one has a positive lower bound of $v_-(\omega_M)$, for instance, when the manifold admits a nef and big closed real $(1,1)$-form $\beta$, see Lemma 9 in \cite{GP23}.
\end{enumerate}
\end{remark}

\section{log-continuity}
Now we introduce the approximation of plurisubharmonic functions we need. Let $\rho:\mathbb{C}^n\rightarrow \mathbb{R}$ be a smooth kernel such that $\int_{\mathbb{C}^n}\rho(|w|)dV(w)=1$ and $\rho(t)=1$ when $t\leq \frac{3}{4}$ and has compact support in the unit ball. One defines \[
    \rho_\delta u(x):=\frac{1}{\delta^{2n}}\int_{T_xM}u(\exph_x(\zeta))\rho(\frac{|\zeta|_{\omega_x}^2}{\delta^2})dV_{\omega_x}(\zeta),
\]where $\exph$ is the holomorphic part of the exponential map with respect to the Chern connection associated with Hermitian metric $\omega$. For details about this holomorphic part of the exponential map, see \cite{MR1319346} proposition 2.9. %In the following, we use $B(x,\delta)$ to denote the image of the ball with radius $\delta$ in $T_xM$ with respect to metric $\omega_{Mx}$ under the map $\exph$. 

We list some properties of $\rho_\delta u$ we need in the proof as follows:
\begin{proposition}[Berman-Demailly\cite{BD12}, lemma 1.2 in  see also \cite{GG23} ]\label{l1approximation}Let $u\in psh(M,\omega)$ with normalization $1\leq u\leq C$. We have \begin{enumerate}
\item There exists a constant $K>0$ depends on the curvature of $\omega$, such that $\rho_tu+Kt$ is monotone increasing in $t$. 
\item For any $c_1>0$, define $U_{c_1,\delta}=\inf_{t\in(0,\delta)}(\rho_tu(x)+Kt-c_1\log(\frac{t}{\delta})-K\delta)$, then \[
    \omega+\ddc U_{c_1,\delta}\geq -(Ac_1+K\delta)\omega,
\]where $-A$ is the lower bound of the bidectional curvature of $\omega$.
\item $\int_M|\rho_\delta u-u|\omega^n\leq C(n,\omega)\delta^2, \quad \forall \delta\in(0,1].$
\end{enumerate}
\end{proposition}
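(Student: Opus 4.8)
The plan is to transport the three assertions to a Euclidean model by means of the holomorphic exponential map $\exph_x$, using freely the properties established in \cite{MR1319346} (Prop.~2.9): for each $x$ the map $\zeta\mapsto\exph_x(\zeta)$ is holomorphic with $\exph_x(0)=x$ and $d(\exph_x)_0=\mathrm{id}_{T_xM}$, in a holomorphic frame it expands as $\exph_x(\zeta)=x+\zeta+O(|\zeta|^2)$ with $\exph_x^*(\omega^n)=(1+O(|\zeta|^2))\,dV_{\mathrm{euc}}(\zeta)$, and on a fixed ball $B(0,r_0)\subset T_xM$ (with $r_0\le 1$ and $\delta<r_0$) one has $\ddc_\zeta(u\circ\exph_x)\ge-\exph_x^*\omega\ge-2\,\omega_{\mathrm{euc}}$, all the $O(\cdot)$'s controlled by the curvature of $\omega$. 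After the substitution $\zeta=t\eta$,
\[
\rho_t u(x)=\int_{|\eta|<1}u\big(\exph_x(t\eta)\big)\,\rho(|\eta|^2)\,dV(\eta),
\]
so, writing $c_\rho:=\int|\eta|^2\rho(|\eta|^2)\,dV(\eta)$ and $g_x(\zeta):=u(\exph_x(\zeta))+A_0|\zeta|^2$ (plurisubharmonic on $B(0,r_0)$ for a curvature-dependent $A_0$), one gets $\rho_t u(x)=\int g_x(t\eta)\rho(|\eta|^2)\,dV(\eta)-A_0c_\rho t^2$.

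For (1): since $g_x$ is psh, its spherical integrals over $\{|\zeta|=r\}$ are non-decreasing in $r$, hence $t\mapsto\int g_x(t\eta)\rho(|\eta|^2)\,dV(\eta)$ is non-decreasing on $(0,r_0)$; therefore, for $0<t_1<t_2<\delta\le 1$,
\[
\rho_{t_2}u(x)-\rho_{t_1}u(x)\ \ge\ -A_0c_\rho(t_2^2-t_1^2)\ \ge\ -2A_0c_\rho(t_2-t_1),
\]
which is (1) with $K=2A_0c_\rho$; below we simply enlarge $K$ to whatever curvature-dependent value is needed in (2). For (3): the sub-mean-value inequality for $g_x$ gives $\rho_\delta u(x)\ge g_x(0)-A_0c_\rho\delta^2=u(x)-A_0c_\rho\delta^2$ pointwise, so
\[
\int_M|\rho_\delta u-u|\,\omega^n=\int_M(\rho_\delta u-u)\,\omega^n+2\int_M(\rho_\delta u-u)^-\,\omega^n\le\Big|\int_M\rho_\delta u\,\omega^n-\int_M u\,\omega^n\Big|+2A_0c_\rho\delta^2\vol_\omega(M);
\]
for the first term one changes the order and the variable of integration, using that $x\mapsto\exph_x(\delta\eta)$ is a diffeomorphism with Jacobian $1+O(\delta^2)$ and that the resulting $O(\delta)$-terms integrate to zero against $\rho(|\eta|^2)\,dV(\eta)$ by rotational symmetry, giving $|\int_M\rho_\delta u\,\omega^n-\int_M u\,\omega^n|\le C(n,\omega)\delta^2$ (using $1\le u\le C$). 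In both (1) and (3) the identities for the merely psh $u$ are justified by approximating $u$ from above by smooth $\omega$-psh functions.

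The content is (2), an instance of Kiselman's minimum principle. Since $\rho(|\eta|^2)\,dV(\eta)$ is invariant under $\eta\mapsto e^{i\theta}\eta$, for $\tau\in\CC$ with $\mathrm{Re}\,\tau<0$ one has $\rho_{\delta e^{\mathrm{Re}\,\tau}}u(z)=\int u(\exph_z(\delta e^{\tau}\eta))\,\rho(|\eta|^2)\,dV(\eta)$, so, working in a local holomorphic chart with $\ddc\phi=\omega$, the map $\Psi_\eta(z,\tau):=\exph_z(\delta e^{\tau}\eta)$ is holomorphic in $(z,\tau)$ and hence $(z,\tau)\mapsto u(\Psi_\eta(z,\tau))$ is $\Psi_\eta^*\omega$-plurisubharmonic. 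A Taylor-expansion estimate of $\exph$ — this is where the lower bound $-A$ of the bisectional curvature enters — bounds $\int\Psi_\eta^*\omega\,\rho(|\eta|^2)\,dV(\eta)$ above by $(1+Ac_1+K\delta)\,\omega(z)$, up to contributions in the $d\tau\wedge d\bar\tau$- and mixed directions of lower order in $t:=\delta e^{\mathrm{Re}\,\tau}$ which, after enlarging $K$, are dominated — via a positivity computation for the resulting $(n+1)\times(n+1)$ Hermitian matrix (Schur complement) — by $\ddc_\tau(K\delta e^{\mathrm{Re}\,\tau})$; together with the pluriharmonic term $-c_1\,\mathrm{Re}\,\tau$ (which encodes $-c_1\log(t/\delta)$) this shows that
\[
(z,\tau)\ \longmapsto\ W\big(z,\delta e^{\mathrm{Re}\,\tau}\big)+(1+Ac_1+K\delta)\,\phi(z),\qquad W(z,t):=\rho_t u(z)+Kt-c_1\log(t/\delta)-K\delta,
\]
is plurisubharmonic in $(z,\tau)$ and independent of $\mathrm{Im}\,\tau$. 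Kiselman's minimum principle then gives that $z\mapsto\inf_{t\in(0,\delta)}W(z,t)=U_{c_1,\delta}(z)$ satisfies $\omega+\ddc U_{c_1,\delta}\ge-(Ac_1+K\delta)\omega$.

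The main obstacle is exactly this last bookkeeping in step (2): one must read off from the quadratic Taylor coefficients of $\exph$ the precise curvature contribution to $\Psi_\eta^*\omega$ in each of the $z\bar z$-, $z\bar\tau$- and $\tau\bar\tau$-blocks, verify that the latter two are of lower order in $t$ and hence absorbed by $\ddc_\tau(K\delta e^{\mathrm{Re}\,\tau})$ uniformly for $\delta\le 1$, and identify the surviving $z\bar z$-correction with exactly $(Ac_1+K\delta)\omega$ rather than with a cruder constant. The remaining ingredients — the mollification identities for psh functions, the monotonicity and convexity of radial averages of psh functions, and Kiselman's principle itself — are standard, and for them one may also simply invoke \cite{BD12,GG23}.
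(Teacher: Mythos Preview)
The paper does not prove this proposition; it is stated with attribution to Berman--Demailly \cite{BD12} (see also \cite{GG23}) and used as a black box. So there is no ``paper's own proof'' to compare against.

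Your sketch follows exactly the Berman--Demailly argument: monotonicity of radial means of the locally-corrected psh function $g_x$ for (1), the sub-mean-value inequality plus a Fubini/change-of-variables computation for (3), and Kiselman's minimum principle after complexifying $t=\delta e^{\mathrm{Re}\,\tau}$ for (2). That is the right strategy and matches the cited references. Two places deserve more care if you want a self-contained proof rather than a sketch. First, in (3), the claim that the $O(\delta)$-terms in the Jacobian of $x\mapsto\exph_x(\delta\eta)$ integrate to zero against $\rho(|\eta|^2)\,dV(\eta)$ ``by rotational symmetry'' is not automatic: the first-order term in $\delta$ involves derivatives of the metric and of $\exph$ in the base variable $x$, not just in $\eta$, so you need to actually write it out and check the parity. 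Second, in (2), the bookkeeping you flag as ``the main obstacle'' is genuinely the whole content: the identification of the $z\bar z$-block of $\int\Psi_\eta^*\omega\,\rho\,dV$ with $(1+Ac_1+K\delta)\omega$ rather than a cruder bound is precisely Lemma~1.12 of \cite{BD12}, and in the Hermitian setting the relevant curvature is the Chern curvature along the holomorphic exponential, which is where the bisectional-curvature lower bound $-A$ enters. Your Schur-complement remark for absorbing the mixed $z\bar\tau$-terms into $\ddc_\tau(K\delta e^{\mathrm{Re}\,\tau})$ is the correct mechanism, but it only works once you have established that those mixed terms are $O(t)$ uniformly in $z$, which again comes from the second-order expansion of $\exph$.
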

With this approximation lemma, one can prove that 
\begin{lemma}There exists a uniform constant $\delta_1>0$ such that for all $\delta<\delta_1$, one has \begin{equation}
    |\rho_\delta u(x)-u(x)|\leq \frac{C}{|\log\delta|^{\alpha}},
\end{equation}
    \end{lemma}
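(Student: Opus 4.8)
The plan is to combine the stability estimate (Proposition~\ref{stabilityestimate}) with the three properties of the regularization $\rho_\delta u$ listed in Proposition~\ref{l1approximation}. The key observation is that the regularized function $\rho_\delta u$, after a small correction, is a candidate for the role of $v$ in the stability estimate; and separately, the original $u$ plays the role of $v$ when we want to bound $\rho_\delta u - u$ from the other side. So the argument splits into two inequalities: $\sup_M(\rho_\delta u - u) \le C|\log\delta|^{-\alpha}$ and $\sup_M(u - \rho_\delta u) \le C|\log\delta|^{-\alpha}$, and each follows from Proposition~\ref{stabilityestimate} once we produce the right $\omega_M$-psh competitor and estimate the relevant $L^1$-norm by $C\delta^2$.

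First I would normalize so that $1 \le u \le C$ (using the $L^\infty$ bound from Lemma~\ref{estimatesupslope}; replacing $u$ by $u + $ const does not change $\omega_M + \ddc u$), so Proposition~\ref{l1approximation} applies. For the easy direction, $u$ itself is in $Psh(\omega_M) \cap C^\infty(M,\RR^+)$, so plugging $v = u$ into a symmetric version of the stability estimate — or rather, observing directly that we want to bound $u - \rho_\delta u$ — I would instead use $v = \rho_\delta u + Kt$ type corrections. The genuinely useful direction uses part (2) of Proposition~\ref{l1approximation}: the function $U_{c_1,\delta} = \inf_{t\in(0,\delta)}(\rho_t u + Kt - c_1\log(t/\delta) - K\delta)$ satisfies $\omega_M + \ddc U_{c_1,\delta} \ge -(Ac_1 + K\delta)\omega_M$, so $(1 + Ac_1 + K\delta)^{-1}(U_{c_1,\delta} + \text{const})$ is a genuine $\omega_M$-psh function (up to an affine renormalization to make it a bounded function, which one can feed into Proposition~\ref{stabilityestimate} as $v$). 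By part (1), $\rho_t u + Kt \ge \rho_0 u = u$ in the limit $t \to 0$, actually $\rho_t u + Kt$ is increasing so $\rho_\delta u + K\delta \ge \rho_t u + Kt \ge u$ — wait, it's increasing in $t$, so $\rho_\delta u + K\delta \ge \rho_t u + Kt$ for $t < \delta$; in particular the infimum defining $U_{c_1,\delta}$ is comparable to $u$ up to the $c_1\log(t/\delta)$ term, which is small when $c_1$ is small. Then $\|(v - u)_+\|_{L^1}$ is controlled: since $v$ differs from $U_{c_1,\delta}$ by affine renormalization and $U_{c_1,\delta} \le \rho_\delta u + K\delta - K\delta = \rho_\delta u$ (taking $t\to\delta$... one has to be a touch careful with which term in the inf is used), and $\|\rho_\delta u - u\|_{L^1} \le C\delta^2$ by part (3), we get $\|(v-u)_+\|_{L^1} \le C(\delta^2 + c_1)$. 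Choosing $c_1 \sim \delta^2$, Proposition~\ref{stabilityestimate} gives $\sup(v - u) \le C|\log(C\delta^2)|^{-\alpha} \le C|\log\delta|^{-\alpha}$, and since $v \ge$ (renormalized) $U_{c_1,\delta} \gtrsim u - $ (something small), this transfers to $\rho_\delta u - u \ge -C|\log\delta|^{-\alpha}$. For the reverse inequality, $\rho_\delta u + K\delta \ge u$ pointwise by monotonicity together with $\rho_0 u = u$, so $\rho_\delta u \ge u - K\delta \ge u - C|\log\delta|^{-\alpha}$ trivially for $\delta$ small, giving $\sup(u - \rho_\delta u) \le K\delta$. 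Hence both directions are bounded by $C|\log\delta|^{-\alpha}$.

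\textbf{Main obstacle.} The delicate point is the bookkeeping of constants in the renormalization of $U_{c_1,\delta}$ into an honest $\omega_M$-psh function of the form required by Proposition~\ref{stabilityestimate} (which needs $v \in Psh(\omega_M) \cap C^\infty$, positive, with a uniform $L^\infty$ bound), while simultaneously keeping $\|(v-u)_+\|_{L^1}$ of size $O(\delta^2)$ so that $|\log\|(v-u)_+\|_{L^1}|^{-\alpha} \sim |\log\delta|^{-\alpha}$. One must check that the factor $(1 + Ac_1 + K\delta)^{-1}$, which scales $v$, only perturbs $\sup(v-u)$ and the $L^1$-norm by harmless multiplicative/additive constants — this works precisely because $c_1$ and $\delta$ are being sent to $0$, so the scaling factor tends to $1$, but it requires that the $L^\infty$ bound on $v$ be uniform in $\delta$, which follows from $1 \le u \le C$ and the explicit form of $\rho_\delta u$ and $U_{c_1,\delta}$. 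A secondary technical nuisance is that $\rho_\delta u$ need not itself be smooth/psh for the metric $\omega_M$ without the curvature correction $Kt$, and that $U_{c_1,\delta}$ is an infimum hence only continuous, not smooth — so one should either quote that Proposition~\ref{stabilityestimate} extends to bounded $\omega_M$-psh functions by a routine approximation, or regularize $U_{c_1,\delta}$ once more; I would handle this with a one-line remark rather than a separate argument, since the stability estimate's proof only used $L^\infty$ and $L^1$ bounds and a comparison-metric construction that is insensitive to such regularization.
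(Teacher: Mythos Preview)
Your proposal has a genuine gap in the hard direction. The easy inequality is indeed $\rho_\delta u - u \ge -K\delta$, so what must be proved is the \emph{upper} bound $\rho_\delta u - u \le C|\log\delta|^{-\alpha}$. You correctly feed $v \approx (1+Ac_1+K\delta)^{-1}U_{c_1,\delta}$ into Proposition~\ref{stabilityestimate} and obtain $\sup(v-u)\le C|\log\delta|^{-\alpha}$. But this says only $U_{c_1,\delta}\lesssim u + C|\log\delta|^{-\alpha}$, and since $U_{c_1,\delta}\le \rho_\delta u$ (it is an infimum, attained at $t\to\delta$), the inequality goes the wrong way: an upper bound on $U_{c_1,\delta}$ does not yield an upper bound on the larger quantity $\rho_\delta u$. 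Your sentence ``this transfers to $\rho_\delta u - u \ge -C|\log\delta|^{-\alpha}$'' is the trivial direction again, not the one you needed.

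The missing idea, which the paper supplies, is a Kiselman-type argument: at each $x$ the infimum defining $U_{c_1,\delta}(x)$ is realized at some $t_x\in(0,\delta]$, and one shows $t_x\ge\theta\delta$ for a uniform $\theta>0$ by combining $\rho_{t_x}u(x)+Kt_x\ge u(x)$ with the stability bound, so that the penalty term $-c_1\log(t_x/\delta)$ is forced to be bounded. Once $t_x\ge\theta\delta$, monotonicity of $\rho_t u+Kt$ gives $\rho_{\theta\delta}u-u\le \rho_{t_x}u+Kt_x-u\le C|\log\delta|^{-\alpha}$, which is the desired upper bound (after rescaling $\delta$). This step is also why the correct choice is $c_1=|\log\delta|^{-\alpha}$, not $c_1\sim\delta^2$: dividing the inequality for $-c_1\log(t_x/\delta)$ by $c_1$ produces a term $C|\log\delta|^{-\alpha}/c_1$, which stays bounded only with the paper's choice; with $c_1\sim\delta^2$ it blows up and one cannot conclude $t_x\ge\theta\delta$.
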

\begin{proof}One side of the inequality $\rho_\delta u-u\geq -K\delta$ follows from proposition \ref{l1approximation} that $\rho_t u+Kt$ is increasing in $t$. We prove the other side as follows:

 We choose a small $c_1=|\log\delta|^{-\alpha}$, then $c_1$ is large compared with $\delta$. So $\omega_M+\ddc U_{c_1,\delta}\geq -A'c_1\omega_M$. Hence we can define a $\omega_M$-psh function $u_{c_1,\delta}:=(1-A'c_1)U_{c_1,\delta}$. It follows that $\omega_M+\ddc u_{c_1,\delta}\geq (A'c_1)^2\omega_M\geq 0.$ And from the normalization of $u$ we have $u_{c_1,\delta}>0$.  From our definition of $u_{c_1,\delta}$, we have $u_{c_1,\delta}\leq U_{c_1,\delta}\leq \rho_\delta u$, and hence \begin{equation}
     ||(u_{c_1,\delta}-u)_+||_{L^1}\leq ||(\rho_\delta u-u)_+||_{L^1}\leq C\delta^2.
 \end{equation} 
 By proposition \ref{stabilityestimate}, we have \[
 u_{c_1,\delta}(x)-u(x)\leq \frac{C}{|\log\delta|^\alpha}.
 \]
 From the definition of $u_{c_1,\delta}=(1-Ac_1)U_{c_1,\delta}$ and $$U_{c_1,\delta}=\inf_{t\in(0,\delta]}\{\rho_t u+Kt-c_1\log(\frac{t}{\delta})-K\delta\},$$
one gets there exists $t_x\in (0,\delta]$ such that \[
    \rho_{t_x}u(x)+Kt_x-c_1\log(\frac{t_x}{\delta})-K\delta=U_{c_1,\delta}(x)\leq u(x)+A'c_1U_{c_1,\delta}(x)+2ru_{c_1,\delta}+\frac{C}{|\log \delta|^\alpha}.
\]
Since $\rho_tu+Kt$ is increasing in $t$, we have $\rho_{t_x}u(x)+Kt_x\geq u(x)$. 
So \[
    -\log(\frac{t_x}{\delta})\geq K\frac{\delta}{c}+A'U_{c_1,\delta}+C.
\]
From the choice of $c_1$ we have \[
    -\log\frac{t_x}{\delta}\geq -C.
\] In other words, there exists a uniform constant $\theta$ such that $t_x\geq \theta\delta$. So using the fact that $\rho_tu+KT$ is increasing in $t$ again, and the observation $\delta\leq c_1$, we have \[
\begin{aligned}
  \rho_{\theta\delta}+K\theta t-u\leq &\rho_{t_x}+Kt_x-u\\
  \leq & c_1\log(\frac{t_x}{\delta})+K\delta+A'c_1U_{c_1,\delta}+\frac{C}{|\log\delta|^\alpha}\\
  \leq &\frac{C}{|\log\delta|^\alpha}.
\end{aligned}
\]
It implies that there is a universal constant $\delta_1>0$ such that $\forall \delta\leq  \delta_1$, we have \[
    \rho_\delta u(x)-u(x)\leq \frac{C}{|\log\delta|^\alpha}.
\]
\end{proof}

Next, one can improve the integral estimate above to the modulus of continuity in a standard way as in \cite{MR3191972, MR4398254}. For the reader's convenience, we include it here.  \begin{lemma}Suppose $|\rho_\delta u(x)-u(x)|\leq C|\log\delta|^{-\alpha}$ for all $x\in M$, and for all $0<\delta\leq \delta_1$, then there exists uniform constants $\delta_2,C>0$ such that 
    \begin{equation}
        |u(x)-u(y)|\leq \frac{C}{|\log\dist_{\omega_M}(x,y)|^\alpha},\quad \forall \delta\leq \delta_2.
    \end{equation}
\end{lemma}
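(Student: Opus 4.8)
The plan is to use that the mollifications $\rho_\delta u$ are smooth with a uniform Lipschitz bound degenerating only like $\delta^{-1}$, and to play this off against the hypothesis $\|\rho_\delta u-u\|_{L^\infty(M)}\le C|\log\delta|^{-\alpha}$ by choosing $\delta$ as a suitable function of $\dist_{\omega_M}(x,y)$. \textbf{Step 1 (uniform gradient estimate).} First I would prove that there are uniform constants $C,\delta_1'>0$, depending only on $M,\omega_M$ and on the bound $\|u\|_{L^\infty}\le C_0$ from Lemma \ref{estimatesupslope}, with $\|\nabla\rho_\delta u\|_{L^\infty(M)}\le C\delta^{-1}$ for all $\delta\le\delta_1'$. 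One cannot differentiate under the integral sign naively, since $\nabla u$ need not be bounded; instead I would perform the change of variables $w=\exph_x(\zeta)$, which for $\delta$ small is a diffeomorphism of $\{|\zeta|_{\omega_x}<\delta\}\subset T_xM$ onto a neighbourhood of $x$, so that
\[
\rho_\delta u(x)=\frac{1}{\delta^{2n}}\int_M u(w)\,\rho\!\left(\frac{|\exph_x^{-1}(w)|_{\omega_x}^2}{\delta^2}\right)J(x,w)\,dV(w),
\]
with $J$ the smooth Jacobian. Now all $x$-dependence sits in the kernel: differentiating in $x$, the term coming from $\rho$ carries a factor $\delta^{-2}\,\nabla_x\big(|\exph_x^{-1}(w)|_{\omega_x}^2\big)=\delta^{-2}\,O(\dist_{\omega_M}(x,w))=O(\delta^{-1})$ on the support of the integrand, while the term from $J$ is $O(1)$; integrating $|u(w)|\le C_0$ against this over a $\delta$-ball with the $\delta^{-2n}$ normalization gives the $O(\delta^{-1})$ bound.

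\textbf{Step 2 (interpolation and optimization).} For $x,y$ with $\dist_{\omega_M}(x,y)$ small enough that a minimal geodesic between them lies in one coordinate chart, integrating the gradient bound along that geodesic gives $|\rho_\delta u(x)-\rho_\delta u(y)|\le C\delta^{-1}\dist_{\omega_M}(x,y)$, so by the triangle inequality
\[
|u(x)-u(y)|\le 2\|\rho_\delta u-u\|_{L^\infty}+\frac{C\dist_{\omega_M}(x,y)}{\delta}\le \frac{C}{|\log\delta|^{\alpha}}+\frac{C\dist_{\omega_M}(x,y)}{\delta}
\]
for every $\delta\le\min\{\delta_1,\delta_1'\}$. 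Taking $\delta=\dist_{\omega_M}(x,y)^{1/2}$ (legitimate once $\dist_{\omega_M}(x,y)^{1/2}$ lies below that threshold, which fixes $\delta_2$), one has $|\log\delta|=\tfrac12|\log\dist_{\omega_M}(x,y)|$ and $\dist_{\omega_M}(x,y)/\delta=\dist_{\omega_M}(x,y)^{1/2}$; since $t^{1/2}=o(|\log t|^{-\alpha})$ as $t\to 0^+$, the first term dominates and $|u(x)-u(y)|\le C'|\log\dist_{\omega_M}(x,y)|^{-\alpha}$.

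\textbf{Main obstacle.} The only step that is not purely formal is Step 1, the uniform $\delta^{-1}$ gradient bound for $\rho_\delta u$ on the Hermitian manifold: one must carefully absorb the $x$-dependence of $\exph_x$, of the fibre norm $|\cdot|_{\omega_x}$ and of the integration domain into the kernel, and check that the final constant only involves the curvature/injectivity data of $(M,\omega_M)$ together with $\|u\|_{L^\infty}$. A derivative-free alternative, which is the route followed in \cite{MR3191972,MR4398254}, replaces Step 1 by: the sub-mean value inequality $u\le\rho_\delta u+K\delta$ from Proposition \ref{l1approximation}(1), positivity of the normalized potential ($u\ge 1$), and monotonicity in the radius of the spherical averages of $\omega_M$-psh functions, which together give $\rho_\delta u(y)\le\rho_{C\delta}u(x)+C\dist_{\omega_M}(x,y)/\delta$ whenever $\dist_{\omega_M}(x,y)\le\delta$; the same optimization in $\delta$ then concludes.
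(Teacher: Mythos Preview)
Your argument is correct, but it follows a different route from the paper. The paper argues by contradiction: setting $\omega(\delta)=\sup_{\dist(x,y)\le\delta}(u(x)-u(y))$ and assuming $\omega(\delta_i)|\log\delta_i|^{\alpha}\to\infty$ along some sequence, it localises near the limit point, uses the sub-mean-value inequality for psh functions to compare the average of $u$ on $B(x_i,(b+1)\delta_i)$ with a decomposition through $B(y_i,b\delta_i)$, and combines this with the hypothesis on $\rho_\delta u$ to obtain a recursive inequality of the form $\tau(\delta_i)\le\frac{3}{4}\tau(\gamma\delta_i)+C$ for $\tau(\delta)=\omega(\delta)|\log\delta|^{\alpha}$, which on iteration contradicts $\tau(\delta_i)\to\infty$. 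No derivative of $\rho_\delta u$ is ever taken.

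By contrast, you prove directly that $\rho_\delta u$ is $C\delta^{-1}$-Lipschitz and then interpolate, optimising over $\delta$. Your method is shorter and conceptually cleaner, and works for any bounded function satisfying the hypothesis (plurisubharmonicity is used only through the $L^\infty$ bound on $u$). The cost is exactly the ``main obstacle'' you flag: one must justify the $O(\delta^{-1})$ gradient bound for the manifold mollifier $\rho_\delta$, which requires controlling how $\exph_x^{-1}$, the fibre metric $|\cdot|_{\omega_x}$, and the Jacobian depend on the base point $x$. The paper's iteration avoids this entirely, relying only on monotonicity of spherical averages of psh functions, and so is more self-contained within the pluripotential framework (this is the approach of \cite{MR3191972,MR4398254} to which the paper refers). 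Either route yields the lemma.
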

\begin{proof}
    Set $\omega(\delta):=\sup_{\dist_{\omega_M}(x,y)\leq \delta}(u(x)-u(y)).$  We argue by contradiction. 
    If the lemma is false, there exists a sequence of points $x_i,y_i\in M$ and $\delta_i\rightarrow 0$ such that \begin{equation}
        \dist_{\omega_M}(x_i,y_i)\leq\delta_i,\quad \omega(\delta_i)=u(x_i)-u(y_i), \quad \omega(\delta_i)|\log\delta_i|^{\alpha}\rightarrow \infty.
    \end{equation}
    Since $M$ is compact, we can assume $x_i,y_i\rightarrow x_0$ as $i\rightarrow \infty$. Let $B$ be a small neighborhood of $x_0$ which can identified as a unit ball in $\CC^n$, and in $B$ one has $(1-\epsilon_0)\dist_{\CC^n}(x,y)\leq \dist_{\omega_M}(x,y)\leq (1+\epsilon_0) \dist_{g_{\CC^n}}(x,y)$. By adding a smooth function, one can assume that $u$ is a plurisubharmonic function in $B$. Fix $b>1$, which will be specified later, 
    \begin{equation}
        \begin{aligned}
            \dashint_{B(x_{i},(b+1)a)}u(x)dV(x)=&\frac{b^{2n}}{(b+1)^{2n}}\dashint_{B(y_{i},b\delta_i)}u(x)dV(x)\\
            &+\frac{1}{|B(x_{i},b+1)\delta_i)}\int_{B(x_i,(b+1)\delta_i)\setminus B(y_i,b\delta_i)}u(x)dV(x)\\
            \geq& \frac{b^{2n}}{(b+1)^{2n}}u(y_i)+(1-\frac{b^{2n}}{(b+1)^{2n}})(u(y_i)-\omega((1+\epsilon_0)(b+2)\delta_i))\\
            =&u(y_i)-(1-\frac{b^{2n}}{(b+1)^{2n}})\omega((1+\epsilon_0)(b+2)\delta_i).
        \end{aligned}
    \end{equation}
    Here we have used the mean value inequality for $u$ and the fact $dist_{\omega_M}(x,y_i)\leq(1+\epsilon_0)(b+2)\delta_i $ for $x\in B(x_i,(b+1)\delta_i)\setminus B(y_i,b\delta_i),$ in the second line. 
    On the other hand \begin{equation}\label{pointwiseestimate}
        \begin{aligned}
            &\dashint_{B(x_i,2(b+1)\delta_i)}u(x)\rho(\frac{|x-x_i|^2}{(2(b+1)\delta_i)^2})dV(x)-u(x_i)\\
            =&|\partial B(0,1)|\int_0^1(\dashint_{\partial B(x_i,2(b+1)t)}ud\sigma(x)-u(x_i))t^{2n-1}dt\\
            \geq &|\partial B(0,1)|\int_{\frac{1}{2}}^1(\dashint_{B(x_{i},(b+1)a)}u(x)dV(x)-u(x_i))t^{2n-1}dt\\
            \geq &C(u(y_i)-u(x_i)-(1-\frac{b^{2n}}{(b+1)^{2n}})\omega((1+\epsilon_0)(b+2)\delta_i))\\
            = &C(\omega(\delta_i)-(1-\frac{b^{2n}}{(b+1)^{2n}})\omega((1+\epsilon_0)(b+2)\delta_i)).
        \end{aligned}
    \end{equation}
    From the expansion of $\exph$(Remark 4.6 in \cite{MR1319346}), we  have \begin{equation}
        \rho_\delta u(x)=\dashint_{B(x_i,\delta)}u(x)\rho(\frac{|x-x_i|^2}{\delta_i^2})dV(x)+O(\delta^2).
    \end{equation} 
So the assumption in the lemma implies \begin{equation}\label{integralestimate}
    \dashint_{B(x_i,2(b+1)\delta_i)}u(x)\rho(\frac{|x-x_i|^2}{(2(b+1)\delta_i)^2})dV(x)-u(x)\leq C|\log(2(b+1)\delta_i)|^{-\alpha}.
\end{equation}
Combine \eqref{pointwiseestimate} and \eqref{integralestimate}, we get \begin{equation}
    \omega(\delta_i)-(1-\frac{b^{2n}}{(b+1)^{2n}})\omega((1+\epsilon_0)(b+2)\delta_i))\leq C|\log(2(b+1)\delta_i)|^{-\alpha}.
\end{equation}
    Set $\tau(\delta):=\omega(\delta)|\log(\delta)|^{\alpha}$. Then \begin{equation}
        \tau(\delta_i)\leq (1-\frac{b^{2n}}{(b+1)^{2n}})\frac{|\log(\delta_i)|^\alpha}{|\log((1+\epsilon_0)(b+2)\delta_i)|^\alpha}\tau((1+\epsilon_0)(b+2)\delta_i))+C
    \end{equation}
    Choose $b$ large such that $(1-\frac{b^{2n}}{(b+1)^{2n}})\leq \frac{1}{2}$ and choose $\delta_1$ small enough, then $$(1-\frac{b^{2n}}{(b+1)^{2n}})\frac{|\log(\delta_i)|^\alpha}{|\log((1+\epsilon_0)(b+2)\delta_i}<\frac{3}{4}.$$ Then \begin{equation}
        \tau(\delta_i)\leq \frac{3}{4}\tau(\gamma\delta_i)+C,\quad \gamma=(1+\epsilon_0)(b+2).
    \end{equation}
    Iterate this inequality, we get a contradiction with $\tau(\delta_i)\rightarrow \infty$. 
\end{proof}

\begin{remark}
    We remark that when one assumes a uniform bound of $\int_M\log^p(|F|+1)(|F|+1)^ne^F\omega_M^n$, for some $p>n$, then with a similar argument, one can prove a uniform modulus of continuity of $\log\log$-type:\[
    |u(x)-u(y)|\leq \frac{C}{(\log|\log(\dist_{\omega_M}(x,y))|)^\alpha}.
    \]
\end{remark}
A corollary of the $\log$-continuity is the diameter bound of the metric. The proof of lemma 5 in \cite{GPTW} shows:
\begin{corollary}
    Fix $p>3n$, let $u$ be a smooth solution to equation \eqref{maequation}, then the diameter of metric $\omega_M+\ddc u$ is bounded uniformly in terms of $M,\omega_M,n,p$, an upper bound of $\int_M(|F|^p+1)e^F$ and a positive lower bound of $\int_Me^{\frac{1}{n}F}\omega_M^n.$
\end{corollary}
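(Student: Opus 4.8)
The plan is to deduce this from the $\log$-continuity of Theorem~\ref{mainthm} by a length--area argument on complex lines, exactly as in the proof of Lemma~5 of \cite{GPTW}. Since $p>3n$ one fixes $\alpha\in(2,\tfrac pn-1)$; Theorem~\ref{mainthm} then gives the modulus of continuity $|u(x)-u(y)|\le\omega\big(\dist_{\omega_M}(x,y)\big)$ with $\omega(s):=C|\log s|^{-\alpha}$, while Lemma~\ref{estimatesupslope} gives $\|u\|_{L^\infty}\le C_0$ and $e^{\pm c}\le C_0$ (so that $\vol_{\omega_u}(M)=e^{c}\int_M e^{F}\omega_M^n$ is bounded above and below). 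It suffices to produce a uniform $r_0>0$ and a uniform $D_0$ with $\mathrm{diam}_{\omega_u}\!\big(B_{\omega_M}(x_0,r_0)\big)\le D_0$ for every $x_0\in M$: covering $M$ by finitely many such balls (in number depending only on $M,\omega_M,r_0$) and using connectedness then bounds $\mathrm{diam}(M,\omega_M+\ddc u)$. So I would fix $x_0$, work in holomorphic coordinates on $B:=B_{\omega_M}(x_0,2r_0)$ viewed as a Euclidean ball, and pass from $u$ to $v:=u+C_1|z|^2$ with $C_1$ large enough that $C_1\ddc|z|^2\ge\omega_M$ on $B$; then $v$ is plurisubharmonic, $\|v\|_{L^\infty(B)}\le C_0+C_1$, $v$ inherits the modulus of continuity $\omega$ up to a fixed constant, and $\ddc v=\omega_u+(C_1\ddc|z|^2-\omega_M)\ge\omega_u$, so $\dist_{\omega_u}\le\dist_{\ddc v}$ and it is enough to control $\dist_{\ddc v}$. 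This detour through a genuine plurisubharmonic potential is the one concession to the Hermitian setting, where $\omega_M$ has no local $\ddc$-potential.

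Next I would reduce to a planar problem. Given $a,b\in B_{\omega_M}(x_0,r_0)$ with $|a-b|=\rho$ small, let $L$ be the complex line through $a$ and $b$; then $(\ddc v)|_L=\ddc(v|_L)$ with $v|_L$ subharmonic on $D:=L\cap B$, and since the intrinsic distance on $L$ dominates the ambient distance, $\dist_{\ddc v}(a,b)\le\dist_{(\ddc v)|_L}(a,b)$. The key estimate, obtained from the Riesz mass of $v|_L$ via the sub-mean-value inequality and the modulus of continuity of $v$, is
\[
\int_{B_L(w,s)}(\ddc v)|_L\ \le\ Cs^{2}+C\,\osc_{B_L(w,2s)}v\ \le\ C\,\omega(s)\qquad(w\in D,\ s\ \text{small}),
\]
using $|\log s|^{-\alpha}\gg s^{2}$ and the near-doubling of $\omega$ at $0$.

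The heart of the argument is then an annular descent for the planar metric $(\ddc v)|_L=h\,dA$, $h\ge 0$. For a centre $w$ and a radius $t$, the length--area (Cauchy--Schwarz) inequality applied to the annulus $\{2^{-k-1}t<|\zeta-w|<2^{-k}t\}$ produces, at a suitable radius in that annulus, a circle about $w$ of $\ddc v$-length $\le C\,(\text{mass of the annulus})^{1/2}\le C\,\omega(2^{-k}t)^{1/2}$ and, in the gap between two consecutive such ``good'' circles, a radial arc of $\ddc v$-length also $\le C\,\omega(2^{-k}t)^{1/2}$. Concatenating the good circles and radial arcs of the annuli $\{2^{-k-1}s_a<|\zeta-a|<2^{-k}s_a\}$, $k\ge 0$, joins a circle of radius $s_a\asymp\rho$ about $a$ to $a$ along a rectifiable path of length $\le C\sum_{k\ge 0}\omega(2^{-k}\rho)^{1/2}$; running the same descent about $b$ and choosing the two outer radii $s_a,s_b\in(\tfrac35\rho,\tfrac34\rho)$ so that the outer circles meet (their centres are at distance $\rho<s_a+s_b$), one joins $a$ to $b$ through a common point, so that
\[
\dist_{\ddc v}(a,b)\ \le\ C\sum_{k\ge 0}\omega(2^{-k}\rho)^{1/2}\ =\ C\sum_{k\ge 0}\big|\log(2^{-k}\rho)\big|^{-\alpha/2}\ \le\ C\,|\log\rho|^{1-\alpha/2},
\]
where the series converges, and the final bound holds, precisely because $\alpha>2$. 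Thus $\mathrm{diam}_{\omega_u}\!\big(B_{\omega_M}(x_0,r_0)\big)\le C|\log r_0|^{1-\alpha/2}=:D_0$, and the covering argument finishes the proof; the resulting constant depends only on $M,\omega_M,n,p$ and on the bounds on $\int_M(|F|^p+1)e^{F}\omega_M^n$ and $\int_M e^{F/n}\omega_M^n$, which enter only through Theorem~\ref{mainthm} and Lemma~\ref{estimatesupslope}.

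The step I expect to be the main obstacle is the bookkeeping in the concatenation: the good circles must be selected in controlled sub-ranges of each dyadic annulus so that consecutive ones are separated by an annulus of definite modulus (which is what supplies the good radial arc), and---crucially---the whole path must be organised so that \emph{no multiplicative constant accumulates from scale to scale}; only the single series $\sum_{k\ge 0}\omega(2^{-k}\rho)^{1/2}$ should appear, and it converges exactly when $\alpha>2$, i.e.\ exactly when $p>3n$, which is why the hypothesis is what it is. A naive chaining that connected $\rho$-close points through intermediate points at the same scale would cost a factor $\ge 2$ per step and diverge, so avoiding that is the delicate point. Once the annular descent is set up as above, the Hermitian case is no harder than the K\"ahler case of \cite{GPTW}: the only changes are the use of $v=u+C_1|z|^2$ with $\ddc v\ge\omega_u$ and the $L^\infty$-bound on $u$ coming from Lemma~\ref{estimatesupslope}; and, if one prefers the alternative route via $\int_M\tr_{\omega_M}\omega_u\,\omega_M^n$, that integral only picks up the harmless term $\int_M u\,\ddc(\omega_M^{n-1})$, again controlled by $\|u\|_{L^\infty}$.
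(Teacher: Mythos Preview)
Your proposal is correct and follows the same route the paper indicates: the paper does not give its own argument but simply refers to the proof of Lemma~5 in \cite{GPTW}, and what you have written is precisely that length--area/annular-descent argument, with the correct choice $\alpha\in(2,\tfrac pn-1)$ explaining the threshold $p>3n$. Your one explicit addition---replacing a local K\"ahler potential by $v=u+C_1|z|^2$ so that $\ddc v\ge\omega_u$ and $v$ is genuinely plurisubharmonic---is exactly the adaptation needed in the Hermitian setting, and the rest of the estimate then proceeds verbatim.
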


\section{Extending to bounded solutions}In this section, we want to prove that any bounded solution to the equation \eqref{maequation} in the sense of Bedford-Taylor also satisfies the log-continuity stated in the main theorem. We follow the trick of Lu-Phung-Tô \cite{MR4398254} to reduce it to the following stability estimate:\begin{proposition}\label{stabilityappro}
    Fix $e^F,e^G\in C^\infty(M,\mathbb{R})$. Assume $u,v$ are $\omega$-psh functions on $M$ satisfying \begin{equation*}
        (\omega_M+\ddc u)^n=e^{u+F}\omega_M^n \quad \text{and }\quad (\omega_M+\ddc v)^n=e^{v+G}\omega_M^n.
    \end{equation*} Then there exists a constant $C$ depending on $M,\omega_M$, $n,p$, an upper bounds for $\int_M(|F|^p+1)e^F\omega_M^n$, $\int_M(|G|^p+1)e^G\omega_M^n$, and a positive lower bound for $\int_Me^{\frac{F}{n}}\omega_M^n$, $\int_Me^{\frac{G}{n}}\omega_M^n$, such that \begin{equation*}
        ||u-v||_{L^\infty}\leq C||e^F-e^G||_{\Phi}^{\frac{1}{n}}
    \end{equation*} 
\end{proposition}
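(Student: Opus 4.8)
The plan is to mirror the strategy used for Proposition \ref{stabilityestimate}, but now exploiting the extra $u$-dependence on the right-hand side, which replaces the unknown constant $c$ by a quantity controlled directly in terms of $\|u\|_{L^\infty}$ and the data. First I would record an a priori $L^\infty$ bound for both $u$ and $v$: integrating $(\omega_M+\ddc u)^n=e^{u+F}\omega_M^n$ over $M$ and applying Lemma \ref{estimatesupslope}-type arguments (with the substitution $\tilde F = u+F$, noting $u$ is bounded), one gets $\|u\|_{L^\infty}+\|v\|_{L^\infty}\le C_0$ for a uniform $C_0$; here the hypotheses on $\int_M(|F|^p+1)e^F\omega_M^n$ and $\int_M e^{F/n}\omega_M^n$ (and likewise for $G$) enter, together with the elementary estimate $\int_M(|\log\Phi+F+u|^{p-n\alpha}+1)\Phi(F)e^{F+u}\le C\int_M(|F|+1)^pe^F$ once $\Phi(x)=x^{n\alpha}$ as in the proof of Proposition \ref{stabilityestimate}. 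With the $L^\infty$ bounds in hand, I would symmetrize: it suffices to bound $\sup_M(u-v)$, then swap $u$ and $v$.

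Next I would run the De Giorgi scheme of Section 2 on the comparison function. The key algebraic input is that since $(\omega_M+\ddc v)^n=e^{v+G}\omega_M^n$ and $\|u\|_{L^\infty},\|v\|_{L^\infty}\le C_0$, on the set $\Omega_s=\{u-v-s>0\}$ one has
\begin{equation*}
(\omega_M+\ddc u)^n - (\omega_M+\ddc v)^n = (e^{u+F}-e^{v+G})\omega_M^n,
\end{equation*}
and writing $e^{u+F}-e^{v+G}=e^{v+G}(e^{(u-v)+(F-G)}-1)$ one can absorb the $e^{u-v}-1$ factor (bounded since $u-v$ is bounded) and reduce the driving term to $\sim \|e^F-e^G\|$ type quantities, measured in the Orlicz norm $\|\cdot\|_\Phi$ dictated by $\Phi(x)=x^{n\alpha}$. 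Concretely I would introduce the auxiliary Monge–Ampère potential $\psi_3$ solving $(\omega_M+\ddc\psi_3)^n = c_{s}\,\eta_j(u-v-s)\,|e^F-e^G|\,\omega_M^n/A_s$ exactly as in the Cheng–Xu trick, apply Lemma \ref{estimatesupslope} with $\Phi(x)=e^{(p_1-1)x}$ to control $c_s$ and $\psi_3$, and then the maximum-principle lemma gives $u-v-s\le \epsilon_3(-\psi_3+\Lambda_3)^{n/(n+1)}$. Iterating the resulting inequality $t\,\phi(s+t)\le A_0\,\phi(s)^{1+\delta_n}$ via Lemma \ref{DeGiorgi} (with $\phi(s)=\int_{\Omega_s}|e^F-e^G|\,\omega_M^n$) yields $\sup_M(u-v)\le C\,\|e^F-e^G\|_\Phi^{\theta}$ for the exponent $\theta$ that the iteration produces, which after optimizing over the Hölder splits ($p_1$ close to $1$, $p_2>n+1$ etc.) comes out as $1/n$.

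The main obstacle I anticipate is bookkeeping the Orlicz norm $\|\cdot\|_\Phi$ consistently through the Hölder inequalities: in Proposition \ref{stabilityestimate} the relevant $L^p/L^q$ interpolations were set up so that the final exponent was $1+\delta_n$ with $\delta_n<\tfrac1n$ arbitrarily close, but here we want the \emph{sharp} power $\tfrac1n$ on $\|e^F-e^G\|_\Phi^{1/n}$, so I would need to check that the loss incurred by the auxiliary-equation step is exactly compensated by the gain from the $u$-dependence (i.e. that $c_s$ here is genuinely bounded both above and below, not merely below as the constant $c$ was in \eqref{maequation} before Lemma \ref{estimatesupslope}). A secondary technical point is justifying the reduction to psh (rather than merely $\omega_M$-psh) via the comparison metrics $\psi_1,\psi_2$ of Lemma \ref{comparisonlemma} when the right-hand side density is $|e^F-e^G|$ rather than $e^F$; but since $|e^F-e^G|\le e^F+e^G$ pointwise, the comparison-metric construction of Lemma \ref{estimatesupslope} applies verbatim to the dominating density, so this should go through. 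Finally, having proved Proposition \ref{stabilityappro}, the passage to bounded Bedford–Taylor solutions follows the Lu–Phung–Tô approximation argument: regularize $e^F$ by smooth $e^{F_k}\to e^F$ in $\|\cdot\|_\Phi$, solve the smooth equations, invoke Theorem \ref{mainthm} for uniform log-continuity of the approximants, and use Proposition \ref{stabilityappro} to get a Cauchy sequence in $L^\infty$ converging to the bounded solution, which therefore inherits the modulus of continuity.
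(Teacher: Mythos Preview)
Your proposal takes a very different route from the paper and, as you yourself suspect, runs into a real obstacle at the exponent.

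The paper does \emph{not} re-run the De Giorgi iteration of Section~2. Instead it exploits the special structure of the equations $(\omega_M+\ddc u)^n=e^{u+F}\omega_M^n$: the factor $e^u$ on the right-hand side gives a built-in monotonicity that makes a one-step maximum principle work. Concretely, after establishing $\|u\|_{L^\infty},\|v\|_{L^\infty}\le C_1$ (via Lemma~\ref{estimatesupslope} and a max-principle comparison with the solution of $(\omega_M+\ddc\psi)^n=e^{F+c_F}\omega_M^n$), the paper solves a \emph{single} auxiliary equation
\[
(\omega_M+\ddc\varphi)^n=e^{c_{F,G}}\Bigl(\tfrac{|e^F-e^G|}{\|e^F-e^G\|_\Phi}+1\Bigr)\omega_M^n,
\]
forms the convex combination $w=(1-\epsilon)u+\epsilon\varphi-C_1\epsilon+n\log(1-\epsilon)$ with $\epsilon\sim\|e^F-e^G\|_\Phi^{1/n}$, checks that $\omega_w^n\ge e^{w+G}\omega_M^n$, and applies the maximum principle directly to $w-v$. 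The exponent $1/n$ falls out immediately from $\epsilon^n\sim\|e^F-e^G\|_\Phi$.

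Your De Giorgi scheme, by contrast, is designed for the situation of Proposition~\ref{stabilityestimate} where $v$ is an \emph{arbitrary} $\omega_M$-psh function and no monotonicity is available; that is why the iteration there only produces $\delta_n<\tfrac1n$ and the final bound is logarithmic rather than a clean power. Your worry that ``the loss incurred by the auxiliary-equation step'' might not be ``exactly compensated by the gain from the $u$-dependence'' is well-founded: nothing in the iteration recovers the lost exponent, so your argument would at best yield $\|u-v\|_{L^\infty}\le C\|e^F-e^G\|_\Phi^\theta$ with $\theta$ strictly less than $1/n$, which does not prove the proposition as stated. The fix is not to tune the H\"older splits but to abandon the iteration entirely and use the $e^u$-monotonicity directly, as the paper does.
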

Here, we will use the notation that \[
||e^F||_{\Phi}:=\inf\{t>0|\int_{M}\Phi(F-\log t)e^{F-\log t}\omega_M^n\leq 1\}.
\]The norm $||e^F||_\Phi$ is equivalent to the Orlicz norm of $e^F$ with the weight grows like $x\Phi(\log x).$ For simplicity, we fix a increasing function $\Phi(x):\mathbb{R}\to \mathbb{R}^+$ with $\Phi(x)=x^p$ for some $p>n$ when $x>1$.
Such form of stability estimate is easier in the K\"ahler case for equation $(\omega_M+\ddc u)^n=e^{F}\omega_M^n$ because the comparison principle holds. On Hermitian manifolds, a modified comparison principle was proved by Kolodziej-Cuong and they used it to prove a stability estimate with an additional non-degeneracy assumption that $e^F>c_0>0$. But for the equation $(\omega_M+\ddc u)^n=e^{u+F}\omega_M^n$, one can remove the nondegeneracy conditions on $e^F$(\cite{MR4398254}).  
\begin{proof}
First, we note that the solution $u,v$ are uniformly bounded: We consider the following equation: \[
(\omega_M+\ddc\psi)^n=e^{F+c_F}\omega_M^n, \omega_M+\ddc\psi>0, \sup \psi=0.
\]By lemma \ref{estimatesupslope}, we know that $e^{c_F}+e^{-c_F}+||\varphi||_{L^\infty}\leq C_0$ for some uniform constant $C_0$. Then we can apply the maximum principle to get the uniform estimate for $u$ as follows: let $x_0$ be the point where $u-\varphi$ attains its maximum, then at $x_0$, we have \[
0\geq \ddc(u-\psi)=\omega_u-\omega_\psi.
\]Combining with the equation we get $
e^{u+F}\leq  e^{c_F+F}$, which implies $u(x_0)\leq e^{c_F}$. It follows that \[
\max u\leq \max(u-\psi)+\max\psi\leq u(x_0)-\min\psi\leq 2C_0.
\]Similary one can prove that $u\geq -2C_0$. So let $C_1$ be a uniform upper bound for $||u||_{L^\infty},||v||_{L^\infty}$. 

    We consider the following auxiliary equation \[
    (\omega_M+\ddc\varphi)^n=e^{c_{F,G}}\left(\frac{|e^F-e^G|}{||e^F-e^G||_\Phi}+1\right)\omega_M^n
    \]
    Then it is clear that the $L^1\Phi(\log L)$-integral of $\frac{|e^F-e^G|}{||e^F-e^F||_\Phi}$ is bounded by $1$ from our definition of $||\cdot||_\Phi$. Therefore from lemma \ref{estimatesupslope}, we have $e^{c_{F,G}}+e^{-c_{F,G}}+||\varphi||_{L^\infty}$ is uniformly bounded, say by $C_2$.  

    Set $\epsilon=e^{(C_1-C_2)/n}||e^F-e^G||_{\Phi}.$ If $\epsilon\leq \frac{1}{2},$ we'd like to apply maximum principle to the function \[
    w=(1-\epsilon)u+\epsilon\varphi-C_1\epsilon+n\log(1-\epsilon),
    \]
    By our choice of $\epsilon$, we have $\frac{\epsilon^ne^{c_{F,G}}}{||e^F-e^G||_\Phi}=e^{C_1+c_{F,G}-C_2}\geq e^{u+n\log(1-\epsilon)}$, and for $w$ we also have $w\leq u+n\log(1-\epsilon)$. Therefore we can compute \[
    \omega_w^n\geq (1-\epsilon)^n\omega_u^n+\epsilon^n\omega_\varphi^n\geq e^{u+n\log(1-\epsilon)}e^G\omega_M^n\geq e^{w}\omega_M^n.
    \]
    Apply the maximum principle to $w-v$: at the maximum point of $w-v$, say $x_0$, we have $\omega_w\leq \omega_v$, which imples \[
    e^{w+G}\leq e^{v+G},
    \] so $w\leq v.$ From the expression of $w$, we get \[
    u-v\leq C_3\epsilon 
    \] for a uniform constant $C_3$. Similarly, one can prove the reverse $v-u\leq C_3\epsilon$. 
    In summary, we get in the case of $\epsilon=e^{(C_1-C_2)/n}||e^F-e^G||_\Phi\leq \frac{1}{2}$, we have $|u-v|_{L^\infty}\leq C_4||e^F-e^G||_{\Phi}^\frac{1}{n}$, for a uniform constant $C_4$.
    When, $\epsilon\geq \frac{1}{2}$, it's much easier since we have a uniform lower positive lower bound of $||e^{F}-e^{G}||_\Phi$, and enlarge the constant $C_4$ if necessary, we still get \[
    |u-v|_{L^\infty}\leq C_4||e^F-e^G||_{\Phi}^\frac{1}{n}
    \]  
\end{proof}
With such a stability lemma, one can pass the previous log-continuity in theorem \ref{mainthm} to the bounded solution $u$. Assume the integral bounds for $F$ as in theorem \ref{mainthm}, and let $u$ be a bounded solution (in the Bedford-Taylor sense) to equation \[
\omega_u^n=e^{F+c}\omega_M^n, \omega_u\geq0,\sup u=0.
\]Let $G_j$ be a sequence of smooth function such that $G_j\leq G_{j+1}\to -u+F+c$ pointwisely. Then we have \[
\int_M\Phi(G_j)e^{G_j}\omega_M^n\leq \int_M\Phi(-u+F+c)e^{-u+F+c}\omega_M^n\leq C,\]for a uniform constant $C$. And by dominant convergence theorem, $\int_Me^{G_j/n}\omega^n\to \int_{M}e^{-u+F+c}\omega_M^n$. So we have a uniform positive lower bound of $\int_{M}e^{G_j/n}\omega^n$. Moreover, we claim that $||e^{G_j}-e^{-u+F+c}||_\Phi\rightarrow 0$. For simplicity, we let $G:=-u+F+c$.  Assume this is not true, then there exist $\delta>0$ such that \[
\int_M\Phi(\log(e^G-e^{G_j})-\log \delta)(e^G-e^{G_j})/\delta \geq 1.
\] 
But by the dominant convergence theorem, the above integral converges to $0$, we get a contradiction. 

Now consider the following family of equations\[
\omega_{u_j}^n=e^{u_j+G_j}\omega_M^n,\omega_{u_j}>0.
\] The by theorem \ref{mainthm}, the solution $u_j$ has the stated uniform log-continuity. And by passing to a subsequence, with proposition \ref{stabilityappro}, $u_j\to u_\infty\in L^\infty$. Note that $u_\infty$ solves \[
\omega_{u_\infty}^n=e^{u_\infty-u+F+c}\omega_M^n
\]Note that $u$ is a solution to the limit equation, and by uniqueness(theorem 0.1 in \cite{Ng16}) we get $u_\infty=u$. It follows that $u$ satisfies the stated log-continuity.
\AtNextBibliography{\small}
\begingroup
%\setstretch{1.1}
\setlength\bibitemsep{2pt}
\printbibliography

@incollection {MR1319346,
    AUTHOR = {Demailly, Jean-Pierre},
     TITLE = {Regularization of closed positive currents of type {$(1,1)$}
              by the flow of a {C}hern connection},
 BOOKTITLE = {Contributions to complex analysis and analytic geometry},
    SERIES = {Aspects Math.},
    VOLUME = {E26},
     PAGES = {105--126},
 PUBLISHER = {Friedr. Vieweg, Braunschweig},
      YEAR = {1994},
      ISBN = {3-528-06633-4},
   MRCLASS = {32C30 (32J25)},
  MRNUMBER = {1319346},
MRREVIEWER = {Mongi\ Blel},
}

@article {MR3191972,
    AUTHOR = {Demailly, Jean-Pierre and Dinew, S\l awomir and Guedj, Vincent
              and Pham, Hoang Hiep and Ko\l odziej, S\l awomir and Zeriahi,
              Ahmed},
     TITLE = {H\"older continuous solutions to {M}onge-{A}mp\`ere equations},
   JOURNAL = {J. Eur. Math. Soc. (JEMS)},
  FJOURNAL = {Journal of the European Mathematical Society (JEMS)},
    VOLUME = {16},
      YEAR = {2014},
    NUMBER = {4},
     PAGES = {619--647},
      ISSN = {1435-9855,1435-9863},
   MRCLASS = {32W20 (32Q15 32U05 32U15 32U40 35B65 35J96 53C55)},
  MRNUMBER = {3191972},
MRREVIEWER = {Muhammed\ Ali\ Alan},
       DOI = {10.4171/JEMS/442},
       URL = {https://doi.org/10.4171/JEMS/442},
}

@article {MR4398254,
    AUTHOR = {Lu, Chinh H. and Phung, Trong-Thuc and T\^o, T\^at-Dat},
     TITLE = {Stability and {H}\"older regularity of solutions to complex
              {M}onge-{A}mp\`ere equations on compact {H}ermitian manifolds},
   JOURNAL = {Ann. Inst. Fourier (Grenoble)},
  FJOURNAL = {Universit\'e{} de Grenoble. Annales de l'Institut Fourier},
    VOLUME = {71},
      YEAR = {2021},
    NUMBER = {5},
     PAGES = {2019--2045},
      ISSN = {0373-0956,1777-5310},
   MRCLASS = {32W20 (32Q15 32U05)},
  MRNUMBER = {4398254},
MRREVIEWER = {Rafa\l\ Czy\.z},
       DOI = {10.5802/aif.3436},
       URL = {https://doi.org/10.5802/aif.3436},
}

@misc{liu2024complexalexandrovbakelmanpucciestimateapplications,
      title={Complex Alexandrov-Bakelman-Pucci estimate and its applications}, 
      author={Junbang Liu},
      year={2024},
      eprint={2410.04395},
      archivePrefix={arXiv},
      primaryClass={math.DG},
      url={https://arxiv.org/abs/2410.04395}, 
}

@misc{liu24c,
title={A new PDE method to sharp uniform estimates for complex Monge-Ampere type equations},
Author={Junbang Liu},
url={https://sites.google.com/stonybrook.edu/liu/research},
}

@article {TW10,
    AUTHOR = {Tosatti, Valentino and Weinkove, Ben},
     TITLE = {The complex {M}onge-{A}mp\`ere equation on compact {H}ermitian
              manifolds},
   JOURNAL = {J. Amer. Math. Soc.},
  FJOURNAL = {Journal of the American Mathematical Society},
    VOLUME = {23},
      YEAR = {2010},
    NUMBER = {4},
     PAGES = {1187--1195},
      ISSN = {0894-0347,1088-6834},
   MRCLASS = {32W20 (32Q15)},
  MRNUMBER = {2669712},
MRREVIEWER = {S\l awomir\ Ko\l odziej},
       DOI = {10.1090/S0894-0347-2010-00673-X},
       URL = {https://doi.org/10.1090/S0894-0347-2010-00673-X},
}

@article {KN15,
    AUTHOR = {Ko\l odziej, S\l awomir and Nguyen, Ngoc Cuong},
     TITLE = {Weak solutions of complex {H}essian equations on compact
              {H}ermitian manifolds},
   JOURNAL = {Compos. Math.},
  FJOURNAL = {Compositio Mathematica},
    VOLUME = {152},
      YEAR = {2016},
    NUMBER = {11},
     PAGES = {2221--2248},
      ISSN = {0010-437X,1570-5846},
   MRCLASS = {32U40 (35D30 35J96 53C55)},
  MRNUMBER = {3577893},
MRREVIEWER = {Dongrui\ Wan},
       DOI = {10.1112/S0010437X16007417},
       URL = {https://doi.org/10.1112/S0010437X16007417},
}

@article {CNS2,
    AUTHOR = {Caffarelli, L. and Kohn, J. J. and Nirenberg, L. and Spruck,
              J.},
     TITLE = {The {D}irichlet problem for nonlinear second-order elliptic
              equations. {II}. {C}omplex {M}onge-{A}mp\`ere, and uniformly
              elliptic, equations},
   JOURNAL = {Comm. Pure Appl. Math.},
  FJOURNAL = {Communications on Pure and Applied Mathematics},
    VOLUME = {38},
      YEAR = {1985},
    NUMBER = {2},
     PAGES = {209--252},
      ISSN = {0010-3640,1097-0312},
   MRCLASS = {35J65 (58G30)},
  MRNUMBER = {780073},
MRREVIEWER = {Philippe\ Delano\"e},
       DOI = {10.1002/cpa.3160380206},
       URL = {https://doi.org/10.1002/cpa.3160380206},
}

@misc{DemBook,
Author={Jean-Pierre Demailly},
Title={Complex Analytic and
Differential Geometry},
url={https://www-fourier.ujf-grenoble.fr/~demailly/manuscripts/agbook.pdf}}

@misc{CX24,
      title={Viscosity solution to complex Hessian equations on compact Hermitian manifolds}, 
      author={Jingrui Cheng and Yulun Xu},
      year={2024},
      eprint={2406.00953},
      archivePrefix={arXiv},
      primaryClass={math.AP},
      url={https://arxiv.org/abs/2406.00953}, 
}

@article {GL2,
    AUTHOR = {Guedj, Vincent and Lu, Chinh H.},
     TITLE = {Quasi-plurisubharmonic envelopes 2: {B}ounds on
              {M}onge-{A}mp\`ere volumes},
   JOURNAL = {Algebr. Geom.},
  FJOURNAL = {Algebraic Geometry},
    VOLUME = {9},
      YEAR = {2022},
    NUMBER = {6},
     PAGES = {688--713},
      ISSN = {2313-1691,2214-2584},
   MRCLASS = {32W20 (32J18 32U05 35A23 35J96)},
  MRNUMBER = {4518244},
MRREVIEWER = {S\l awomir\ Dinew},
       DOI = {10.14231/ag-2022-021},
       URL = {https://doi.org/10.14231/ag-2022-021},
}

@misc{GS24,
      title={Sup-slopes and sub-solutions for fully nonlinear elliptic equations}, 
      author={Bin Guo and Jian Song},
      year={2024},
      eprint={2405.03074},
      archivePrefix={arXiv},
      primaryClass={math.AP},
      url={https://arxiv.org/abs/2405.03074}, 
}

@article{GP23,
author = {Bin Guo and Duong Phong},
title = {{On $L^\infty$ estimates for fully non-linear partial differential equations}},
volume = {200},
journal = {Annals of Mathematics},
number = {1},
publisher = {Department of Mathematics of Princeton University},
pages = {365 -- 398},
keywords = {$L^\infty$ estimates, almost-complex manifolds, auxiliary equations, De Giorgi-Nash-Moser theory, fully non-linear equations, test function for comparison},
year = {2024},
doi = {10.4007/annals.2024.200.1.6},
URL = {https://doi.org/10.4007/annals.2024.200.1.6}
}

@incollection {BD12,
    AUTHOR = {Berman, Robert and Demailly, Jean-Pierre},
     TITLE = {Regularity of plurisubharmonic upper envelopes in big
              cohomology classes},
 BOOKTITLE = {Perspectives in analysis, geometry, and topology},
    SERIES = {Progr. Math.},
    VOLUME = {296},
     PAGES = {39--66},
 PUBLISHER = {Birkh\"auser/Springer, New York},
      YEAR = {2012},
      ISBN = {978-0-8176-8276-7},
   MRCLASS = {32U05 (32Q15 32U40 32W20)},
  MRNUMBER = {2884031},
MRREVIEWER = {Vincent\ Guedj},
       DOI = {10.1007/978-0-8176-8277-4\_3},
       URL = {https://doi.org/10.1007/978-0-8176-8277-4_3},
}

@misc{GG23,
      title={Diameter of K\"ahler currents}, 
      author={Vincent Guedj and Henri Guenancia and Ahmed Zeriahi},
      year={2023},
      eprint={2310.20482},
      archivePrefix={arXiv},
      primaryClass={math.DG},
      url={https://arxiv.org/abs/2310.20482}, 
}

@misc{GPTW,
      title={On the modulus of continuity of solutions to complex Monge-Amp\`ere equations}, 
      author={Bin Guo and Duong H. Phong and Freid Tong and Chuwen Wang},
      year={2021},
      eprint={2112.02354},
      archivePrefix={arXiv},
      primaryClass={math.DG},
      url={https://arxiv.org/abs/2112.02354}, 
}

@article {KN19,
    AUTHOR = {Ko\l odziej, S\l awomir and Nguyen, Ngoc Cuong},
     TITLE = {Stability and regularity of solutions of the
              {M}onge-{A}mp\`ere equation on {H}ermitian manifolds},
   JOURNAL = {Adv. Math.},
  FJOURNAL = {Advances in Mathematics},
    VOLUME = {346},
      YEAR = {2019},
     PAGES = {264--304},
      ISSN = {0001-8708,1090-2082},
   MRCLASS = {53C55 (32U40 32W20 35J96)},
  MRNUMBER = {3910489},
MRREVIEWER = {Masaya\ Kawamura},
       DOI = {10.1016/j.aim.2019.02.004},
       URL = {https://doi.org/10.1016/j.aim.2019.02.004},
}

@article {Ng16,
    AUTHOR = {Nguyen, Ngoc Cuong},
     TITLE = {The complex {M}onge-{A}mp\`ere type equation on compact
              {H}ermitian manifolds and applications},
   JOURNAL = {Adv. Math.},
  FJOURNAL = {Advances in Mathematics},
    VOLUME = {286},
      YEAR = {2016},
     PAGES = {240--285},
      ISSN = {0001-8708,1090-2082},
   MRCLASS = {32W20 (32U05 32U40 53C55)},
  MRNUMBER = {3415685},
MRREVIEWER = {Bianca\ Santoro},
       DOI = {10.1016/j.aim.2015.09.009},
       URL = {https://doi.org/10.1016/j.aim.2015.09.009},
}

@article{Di16,
     author = {Dinew, S{\l}awomir},
     title = {Pluripotential theory on compact {Hermitian} manifolds},
     journal = {Annales de la Facult\'e des sciences de Toulouse : Math\'ematiques},
     pages = {91--139},
     publisher = {Universit\'e Paul Sabatier, Toulouse},
     volume = {Ser. 6, 25},
     number = {1},
     year = {2016},
     doi = {10.5802/afst.1488},
     mrnumber = {3485292},
     zbl = {1342.32022},
     language = {en},
     url = {https://www.numdam.org/articles/10.5802/afst.1488/}
}

@misc{Fang25,
      title={Continuity of solutions to complex Hessian equations on compact Hermitian manifolds}, 
      author={Yuetong Fang},
      year={2025},
      eprint={2510.14690},
      archivePrefix={arXiv},
      primaryClass={math.CV},
      url={https://arxiv.org/abs/2510.14690}, 
}

@misc{CX22,
      title={Regularization Of $m$-subharmonic Functions And H\"Older Continuity}, 
      author={Jingrui Cheng and Yulun Xu},
      year={2022},
      eprint={2208.14539},
      archivePrefix={arXiv},
      primaryClass={math.AP},
      url={https://arxiv.org/abs/2208.14539}, 
}

\end{document}